\documentclass{amsart}
\usepackage[utf8]{inputenc}
\usepackage{graphicx}
\usepackage{amsmath,amssymb}
\usepackage{amsthm}
\usepackage{color}
\usepackage{tikz}
\usepackage[utf8]{inputenc}

% ----------------------------------------------------------------
\vfuzz2pt % Don't report over-full v-boxes if over-edge is small
\hfuzz2pt % Don't report over-full h-boxes if over-edge is small

% THEOREMS -------------------------------------------------------
\newtheorem{thm}{Theorem}[section]
\newtheorem{cor}[thm]{Corollary}
\newtheorem{lem}[thm]{Lemma}
\newtheorem{prop}[thm]{Proposition}

\theoremstyle{definition}
\newtheorem{defn}[thm]{Definition}
\theoremstyle{remark}
\newtheorem{rem}[thm]{Remark}

\numberwithin{equation}{section}

% MATH -----------------------------------------------------------

\newcommand{\del}{\delta}

\newcommand{\B}{{\mathbb B}}

\newcommand{\R}{{\mathbb R}}

\newcommand{\C}{{\mathbb C}}

\newcommand{\N}{{\mathbb N}}

\newcommand{\Z}{{\mathbb Z}}

\newcommand{\calA}{{\mathcal A}}
\newcommand{\calB}{{\mathcal B}}

\newcommand{\calD}{{\mathcal D}}

\newcommand{\calF}{{\mathcal F}}
\newcommand{\calG}{{\mathcal G}}

\newcommand{\frakC}{{\mathfrak C}}

\def\one{\mbox{1\hspace{-4.25pt}\fontsize{12}{14.4}\selectfont\textrm{1}}}

\makeatletter
\newcommand{\doublewidetilde}[1]{{%
  \mathpalette\double@widetilde{#1}%
}}
\newcommand{\double@widetilde}[2]{%
  \sbox\z@{$\m@th#1\widetilde{#2}$}%
  \ht\z@=.9\ht\z@
  \widetilde{\box\z@}%
}
\makeatother

\title[Dyadic decomposition]{Dyadic decomposition of convex domains of finite type and applications}

\date{\today}
\author{Chun Gan\quad Bingyang Hu \quad Ilyas Khan}

	\address{Chun Gan \\ University of Wisconsin--Madison \\480 Lincoln Drive\\ Madison, WI 53706, USA} \email{cgan5@wisc.edu}

	\address{Bingyang Hu \\ University of Wisconsin--Madison \\480 Lincoln Drive\\ Madison, WI 53706, USA} \email{bhu32@wisc.edu}
	
	\address{Ilyas Khan \\ University of Wisconsin--Madison \\480 Lincoln Drive\\ Madison, WI 53706, USA} \email{ikhan4@wisc.edu}

\subjclass[2010]{32A36, 42B35}%

\keywords{Sparse domination, Bergman projection, convex domain of finite type, McNeal-Stein tent, dyadic projection tent, dyadic flow tent, weighted estimates}%

\begin{document}

\maketitle

\begin{abstract}
 In this paper, we introduce a dyadic structure on convex domains of finite type via the so-called dyadic flow tents. This dyadic structure allows us to establish weighted norm estimates for the Bergman projection $P$ on such domains with respect to Muckenhoupt weights. In particular, this result gives an alternative proof of the $L^p$ boundedness of $P$. Moreover, using extrapolation, we are also able to derive weighted vector-valued estimates and weighted modular inequalities for the Bergman projection. 
\end{abstract}

\section{Introduction}

Let $\Omega \subseteq \C^n$ be a smoothly bounded convex domain of finite type  and $H(\Omega)$ be the collection of all holomorphic functions on $\Omega$ with compact-open topology. Recall the \emph{Bergman space}
$$
A^2(\Omega):=\left\{ f \in H(\Omega) : \|f\|^2:=\int_\Omega |f(z)|^2dV(z)<\infty \right\},
$$
where $dV$ is the standard Euclidean volume form on $\C^n$. It is a well-known fact that the norm topology on $A^2(\Omega)$ is finer than the compact-open topology. Moreover, $A^2(\Omega)$ is a Hilbert space with the inner product
$$
\langle f, g \rangle:=\int_\Omega f(z) \overline{g(z)}dV(z), \quad f, g \in A^2(\Omega). 
$$
Let $K(z, \xi)$, $z, \xi \in \Omega$ be the \emph{Bergman kernel (reproducing kernel)} associated to the space $A^2(\Omega)$: Namely, 
$$
f(z)=\int_\Omega K(z, \xi)f(\xi)dV(\zeta), \quad  f \in A^2 (\Omega). 
$$
Recall that the \emph{Bergman projection} $P$, which is defined by 
\begin{equation} \label{Berproj}
P: f \mapsto \int_\Omega K(\cdot, \xi)f(\xi)dV(\xi),
\end{equation} 
is the Hilbert space orthogonal projection of $L^2(\Omega, dV)$ onto $A^2(\Omega)$. It is well-known that $P$ is bounded from $L^p(\Omega)$ to itself for $1<p<\infty$. The key idea used to prove this fact is to show that $P$ can be treated as a generalized Calder\'on-Zygmund operator (see, e.g., \cite{M94b}). 

Recently, using modern techniques of dyadic harmonic analysis, Rahm, Tchoundja and Wick \cite{RTW17} proved sharp weighted estimates for the Bergman projection and Berezin transform in Bergman spaces on the unit ball in $\C^n$, in terms of the Bekolle-Bonami constant of the weights.  We recall that a key observation in their argument is that the unit ball can be decomposed dyadically, which leads to a pointwise bound of the Bergman projection via a dyadic operator (or a positive sparse operator).

The purpose of this paper is to introduce a dyadic structure on $\Omega$, an arbitrary convex domain in $\C^n$ of finite type and smooth boundary (see, \eqref{modifieddyadic}).  Moreover, we show that this dyadic structure forms a Muckenhoupt basis on $\Omega$ (see, Proposition \ref{mucdyadic}). As a consequence, we generalize the pointwise sparse bounds in \cite{RTW17} to the case of convex domains of finite type (see, Lemma \ref{sparsebound}), and this allows us to estabish weighted norm estimates of the Bergman projection on $\Omega$ (see, Theorem \ref{Mainthm}) with respect to such dyadic structure. As corollaries, we are able to 
\begin{enumerate}
\item [(a).] provide a different proof of the $L^p$ boundedness of $P$ on $\Omega$ (see, Corollary \ref{cor001}). We remark that this result was first proved by McNeal in \cite{M94b}, where his approach was to show that the Bergman projection can be viewed as a generalized Calder\'on-Zygumund operator;
\item [(b).] derive new types of estimates of $P$ (see, Corollary \ref{cor002}), such as weighted vector-valued estimates and weighted modular inequalities. 
\end{enumerate}

The key ingredient in our approach is to study the following three different types of tents. More precisely, if $r$ is the the defining function of $b\Omega$, we use the mean curvature of the level sets of $r$ to show that these tents are equivalent to each other. This is new, and it gives an example of how several complex variables and dyadic calculus interact with each other.    
\begin{enumerate}
\item [(1).] \emph{McNeal-Stein tents}, which adapt well to the Bergman kernel of the domain $\Omega$ (see, Figure \ref{Figure1});
\item [(2).] \emph{Dyadic projection tents}, which connect the dyadic structure in $\Omega$ to the geometry of the boundary $b\Omega$ (see, Figure \ref{Figure2});
\item [(3).] \emph{Dyadic flow tents}, which are generated by the normal gradient flow of the defining function of $b\Omega$ and whose volumes can be calculated by tools from geometric analysis. As a consequence, these tents provide a dyadic structure and form a Muckenhoupt basis inside $\Omega$, so that dyadic calculus can be applied  (see, Figure \ref{Figure3}).
\end{enumerate}

\begin{figure}[ht]
\begin{tikzpicture}[scale=4]
\draw   plot[smooth,domain=-.7:.7] (\x, {\x*\x/3});
\draw (0, 0) node [below] {$Q$}; 
\draw [red, line width = 0.50mm] plot[smooth,domain=-.5:.5] (\x, {\x*\x/3});
\draw (-.7, .25) node[above] {$\Omega$};
\fill [red] (.5, .25/3) circle [radius=.2pt];
\fill [red] (-.5, .25/3) circle [radius=.2pt];
\draw (.5, .3)--(-.5, .3)--(-.5, -.3)--(.5, -.3)--(.5, .3); 
\fill [opacity=. 1, blue] (.5, .3)--(.5, .25/3)--(-0.5, .25/3)--(-.5, 0.3)--cycle;
\fill [opacity=. 1, blue] plot[smooth,domain=-.5:.5] (\x, {\x*\x/3})--(-.5, .25/3)--cycle;
\draw (.7, .4/3) node [below] {$b\Omega$};
\draw (.5, -.3) node [right] {$P_{\ell(Q)}(c(Q))$};
\end{tikzpicture}
\caption{McNeal-Stein tent}
\label{Figure1}
\end{figure}
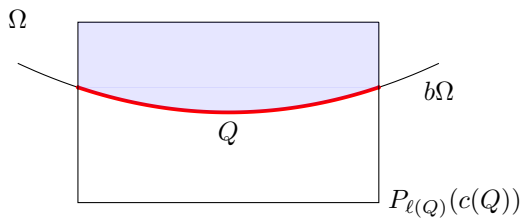

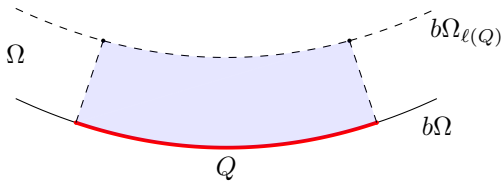
\begin{figure}[ht]
\begin{tikzpicture}[scale=4]
\draw   plot[smooth,domain=-.7:.7] (\x, {\x*\x/3});
\draw  [dashed] plot[smooth,domain=-.7:-.409175] (\x, {\x*\x/3+0.3});
\draw (.7, .4/3) node [below] {$b\Omega$};
\draw  [dashed] plot[smooth,domain=.409175:.7] (\x, {\x*\x/3+0.3});
\draw (0, 0) node [below] {$Q$}; 
\draw [red, line width = 0.50mm] plot[smooth,domain=-.5:.5] (\x, {\x*\x/3});
\draw (-.7, .25) node[above] {$\Omega$};
\fill (0.409175, 0.3558080602) circle [radius=.2pt];
\fill (-0.409175, 0.3558080602) circle [radius=.2pt];
\draw  [dashed] plot[smooth,domain=-.409175:.409175] (\x, {\x*\x/3+0.3});
\fill [red] (.5, .25/3) circle [radius=.2pt];
\fill [red] (-.5, .25/3) circle [radius=.2pt];
\draw [dashed](-.5, .25/3)--(-0.409175, 0.3558080602);
\draw [dashed] (.5, .25/3)--(0.409175, 0.3558080602);
\fill [opacity=. 1, blue] plot[smooth,domain=-.5:.5] (\x, {\x*\x/3})--(.5, .25/3)--(0.409175, 0.3558080602);
\fill [opacity=.1, blue] plot[smooth,domain=-.409175:.409175] (\x, {\x*\x/3+0.3})--(-.5, .25/3)--(-0.409175, 0.3558080602);
\draw (0.8, 0.46) node [below] {$b\Omega_{\ell(Q)}$};
\end{tikzpicture}
\caption{Dyadic projection tent}
\label{Figure2}
\end{figure}

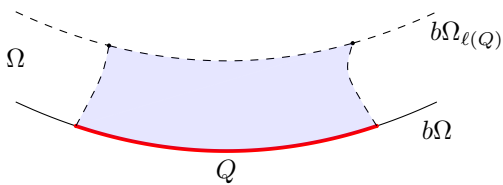
\begin{figure}[ht]
\begin{tikzpicture}[scale=4]
\draw   plot[smooth,domain=-.7:.7] (\x, {\x*\x/3});
\draw  [dashed] plot[smooth,domain=-.7:-.39] (\x, {\x*\x/3+0.3});
\draw (.7, .4/3) node [below] {$b\Omega$};
\draw  [dashed] plot[smooth,domain=.42:.7] (\x, {\x*\x/3+0.3});
\draw (0, 0) node [below] {$Q$}; 
\draw [red, line width = 0.50mm] plot[smooth,domain=-.5:.5] (\x, {\x*\x/3});
\draw (-.7, .25) node[above] {$\Omega$};
\fill (0.42, 0.1764/3+0.3) circle [radius=.2pt];
\fill (-0.39, 0.1521/3+0.3) circle [radius=.2pt];
\draw  [dashed] plot[smooth,domain=-.39:.42] (\x, {\x*\x/3+0.3});
\fill [red] (.5, .25/3) circle [radius=.2pt];
\fill [red] (-.5, .25/3) circle [radius=.2pt];
\draw [dashed] (.5, .25/3) .. controls (0.39, 0.1764/3+0.2)  .. (0.42, 0.1764/3+0.3);
\draw [dashed] (-.5, .25/3) .. controls (-0.41, 0.1521/3+0.2)  .. (-0.39, 0.1521/3+0.3);
\fill [opacity=. 1, blue] plot[smooth,domain=-.5:.5] (\x, {\x*\x/3})--(.5, .25/3)--(.5, .25/3) .. controls (0.39, 0.1764/3+0.2)  .. (0.42, 0.1764/3+0.3);
\fill [opacity=.1, blue]  plot[smooth,domain=-.39:.42] (\x, {\x*\x/3+0.3})--(-.5, .25/3)--(-.5, .25/3) .. controls (-0.41, 0.1521/3+0.2)  .. (-0.39, 0.1521/3+0.3); 
\draw (0.8, 0.46) node [below] {$b\Omega_{\ell(Q)}$};
\end{tikzpicture}
\caption{Dyadic flow tent}
\label{Figure3}
\end{figure}

The outline of this paper is as follows. In Section 2, we recall the construction of McNeal-Stein tents and the general theory of dyadic systems in a space of homogeneous type. In Section 3, we construct and study the properties of the dyadic projection tents and the dyadic flow tents quantitatively, and using these tents, we construct a dyadic structure on $\Omega$. We also show that such a dyadic structure forms a Muckenhoupt basis on $\Omega$. Section 4 is devoted to studying some weighted norm estimates of the Bergman projection $P$ and applications thereof.

Throughout this paper, for $a, b \in \R$, $a \lesssim b$ ($a \gtrsim b$, respectively) means there exists a positive number $C$, which is independent of $a$ and $b$, such that $a \leq Cb$ ($ a \geq Cb$, respectively). Moreover, if both $a \lesssim b$ and $a \gtrsim b$ hold, then we say $a \simeq b$.

%---------------------------------------------------------------------------------------------------
\medskip
\section{McNeal-Stein tents and the dyadic structure decomposition on $b\Omega$}

We start by recalling some basic definitions. Let $\Omega$ be a smoothly bounded domain in $\C^n$. A point $p \in b\Omega$ is said to be of \emph{finite type} if the maximal order of contact at $p$ of $b\Omega$ with germs of non-singular complex analytic sets is finite. The domain $\Omega$ is said to be of \emph{finite type} if each $p \in \Omega$ is of finite type.  Moreover, we say $\Omega$ is of type $M$, for some $M \ge 0, M \in \Z$, if $M=\sup_{p \in b\Omega} \{ \textrm{type of} \ p\}$. Here are some remarks.

\begin{enumerate}
\item [(1).] In the above definition, if we also assume that $\Omega$ is convex, instead of considering all germs of all non-singular analytic sets, it is enough to take into account only complex lines (see, e.g., \cite{Y92});

\item [(2).] The finite type condition was discovered in the study of regularity of the $\bar{\partial}$-Neumann problem (see, e.g., \cite{C87, C89, K72, K73}), which is still in general open. 
\end{enumerate}

Throughout the paper, we always assume that $\Omega$ is a smoothly bounded open convex domain of type $M$ in $\C^n, n>1$, defined by a smooth function $r$, which is non-degenerate on $b\Omega$. We also assume that $r$ is everywhere convex. 

In this section, we first recall the McNeal-Stein tents, which were introduced by McNeal \cite{M94a}, and applied to study the behavior of the Bergman and Szeg\"o projections on such domains by McNeal and Stein later \cite{M94b, MS94}. These tents essentially capture all the geometric facts that motivate the construction of a dyadic system on $\Omega$. In particular, with these tents, we can make $b\Omega$ a space of homogeneous type. We refer the reader to \cite{M94a, M94b, MS94} for a detailed study of these tents, as well as of their applications. 

\begin{defn}
A \emph{space of homogeneous type} is an ordered triple $(X, \rho, \mu)$, where $X$ is a set, $\rho$ is a quasimetric, that is
\begin{enumerate}
    \item [(1).] $\rho(x, y)=0$ if and only if $x=y$;
    \item [(2).] $\rho(x, y)=\rho(y, x)$ for all $x, y \in X$;
    \item [(3).] $\rho(x, y) \le \kappa (\rho(x, z)+\rho(y, z))$, for all $x, y, z \in X$,
\end{enumerate}
for some constant $\kappa>0$, and the non-negative Borel measure $\mu$ is doubling, that is
$$
0<\mu(B(0, 2r)) \le D\mu(B(0, r))<\infty, \quad \textrm{for some} \ D>0, 
$$
where $B(x, r):=\{y \in X, \rho(x, y)<r\}$, for $x \in X$ and $r>0$.
\end{defn}

We start by recalling the construction of the \emph{$\varepsilon$-Extremal basis} $\{u_1, \dots, u_n\}$ at $\xi \in \overline{\Omega}$ with $\varepsilon>0$ small. 
\begin{enumerate}
\item[\textit{Step I}.] First, we let $q_1 \in \C^n$ such that $r(q_1)=r(\xi)+\varepsilon$ in the direction of the line given by the gradient $\partial r(\xi)$. Then $|q_1-\xi|$ is comparable to the distance from $\xi$ to the level set $bD_{\xi, \varepsilon}:=\{z \in \C^n: r(z)=r(\xi)+\varepsilon\}$. Let $u_1$ be the unit vector in the direction of $q_1-\xi$; 

\item[\textit{Step II}.] Second, we choose a unit vector $u_2$ in the complex orthogonal complement of the space $\langle u_1 \rangle$ such that the maximal distance from $\xi$ to $bD_{\xi, \varepsilon}$ along the directions orthogonal to $\langle u_1 \rangle$ is achieved along the line given by $u_2$ in a point $q_2$; 

\item[\textit{Step III}.] Repeat the second step by picking a unit vector $u_3$, which is orthogonal to the complex orthogonal complement of the space $\langle u_1, u_2 \rangle$ and maximizes the distance from $\xi$ to $bD_{\xi, \varepsilon}$ along these directions. The full basis will be constructed after $n$ steps.            
\end{enumerate}

\begin{defn} \label{MStent01}
Let $U$ be some fixed neighborhood of $b\Omega$. Given $\xi \in U$ and $\varepsilon>0$, and let $\{u_1, \dots, u_n\}$ be the $\varepsilon$-Extremal basis at $\xi$, the \emph{McNeal-Stein tent} associated to $\xi$ and $\varepsilon$ is defined as
$$
\Omega \cap P_{\varepsilon}(\xi),
$$
where 
\begin{equation}  \label{MStent}
P_\varepsilon(\xi):=\left\{\xi+\sum_{k=1}^n \lambda_{\xi, \varepsilon, k} u_k \in \C^n: |\lambda_{\xi, \varepsilon, k}| \le \tau(\xi, u_k, \varepsilon), k=1, \dots, n \right\},
\end{equation}
where for $u \in \C^n$, we set
$$
\tau(\xi, u, \varepsilon):=\sup\left\{c>0: |r(\xi+\lambda u)-r(\xi)| \le \varepsilon, |\lambda| \le c \right\}.
$$
\end{defn} 

\begin{rem}
There is a small issue that need to be clarified: it is pointed out in \cite{NPT13} (see, also, \cite[Remark 10.2]{Z16}) that \cite[Proposition 2.1, (iii)]{M94a} is problematic, in which, the concept of extremal basis was involved, however, it is known that this can be fixed by using ``minimal" bases introduced by Hefer \cite{H02} (see, e.g., \cite{NPT13}). 

We also point out that in this paper, we do not use  \cite[Proposition 2.1, (iii)]{M94a}, and the estimates obtained in terms of the extremal basis in Definition \ref{MStent}, as well as the geometric properties of McNeal-Stein tents, stay correct (see, e.g., \cite{H02, H04, NPT13}). 
\end{rem}

We summarize the properties of McNeal-Stein tent as below. 

\begin{enumerate}
\item [(1).] The symbol $\lambda_{\xi, \varepsilon, k}$ in \eqref{MStent} stands for the $k$-th coordinate of $z \in \C^n$ with respect to the $\varepsilon$-Extremal basis at the point $\xi \in U$.
\item [(2).] 
\begin{equation} \label{20191226eq01}
\tau(\xi, u_1, \varepsilon) \simeq \varepsilon \quad \textrm{and} \quad \tau(\xi, u_k, \varepsilon) \lesssim \varepsilon^{1/M}, k=2, \dots, n, 
\end{equation} 
where we recall that $M$ is the type of the domain $\Omega$, and the implict constants in the above estimates only depend on the defining function $r$ and the dimension $n$. Moreover, 
\begin{equation} \label{20191226eq02}
\tau(\xi, u_1, \varepsilon) \lesssim \tau(\xi, u_n, \varepsilon) \le \dots \le \tau(\xi, u_2, \varepsilon). 
\end{equation}
\item [(3).] The function $\rho: b\Omega \times b\Omega \to \R_+$ defines as 
\begin{equation} \label{metric}
\rho(\zeta_1, \zeta_2):=\inf \left\{ \varepsilon>0:  \zeta_1 \in P_{\varepsilon}(\zeta_2), \zeta_2 \in P_{\varepsilon}(\zeta_1) \right\} 
\end{equation} 
is a quasi-metric. Here, we still use $\kappa$ to denote the constant appearing in the triangle inequality, namely, 
$$
\rho(\zeta_1, \zeta_2) \le \kappa\left( \rho(\zeta_1, \zeta_3)+\rho(\zeta_2, \zeta_3) \right), \quad \forall \zeta_1, \zeta_2, \zeta_3 \in b\Omega. 
$$
\item [(4).] The volume form $d\sigma$ on $b\Omega$, which is the restriction of the Euclidean metric to $b\Omega$ is a doubling measure, namely, there exists a constant $K>0$, such that
$$
\sigma(B(\zeta, 2\varepsilon)) \le K\sigma(B(\zeta, \varepsilon)), \quad \forall \zeta \in b\Omega, \varepsilon>0,
$$
where 
$$
B(\zeta, \varepsilon):=\left\{ \eta \in b\Omega: \rho(\eta, \zeta)<\varepsilon \right\}=P_\varepsilon(\zeta) \cap b\Omega. 
$$
is the ball induced by the metric $\rho$ defined in \eqref{metric}. 
\end{enumerate}

Therefore, the triple $(b\Omega, \rho, \sigma)$ is a space of homogeneous type (SHT), and this allows us to apply the \emph{Hyt\"onen-Kairema} decomposition to get a dyadic system on $b\Omega$. More precisely, we have the following theorem.

\begin{thm}[{\cite[Theorem 2.1]{ACM15}, \cite{HK12}}] \label{dyadicSHT}
There exists a family of sets (\emph{we refer it as a dyadic grid in SHT}) $\calD=\bigcup\limits_{k \ge 1} \calD_k$, called a dyadic decomposition of $b\Omega$, constants $\frakC>0, 0<\del, \epsilon<1$, and a corresponding family of points $\{c(Q)\}_{Q \in \calD}$, such that
\begin{enumerate}
\item [(1).] $b\Omega=\bigcup\limits_{Q \in \calD_k} Q$, for all $k \in \Z$;
\item [(2).] For any $Q_1, Q_2 \in \calD$, if $Q_1 \cap Q_2 \neq \emptyset$, then $Q_1 \subseteq Q_2$ or $Q_2 \subseteq Q_1$;
\item [(3).] For every $Q \in \calD_k$, there exists at least one child cube $Q_c \in \calD_{k+1}$ such that $Q_c \subseteq Q$;
\item [(4).] For every $Q \in \calD_k$, there exists exactly one parent cube $\hat{Q} \in \calD_{k-1}$ such that $Q \subseteq \hat{Q}$;
\item [(5).] If $Q_2$ is a child of $Q_1$ , then $\sigma(Q_2) \ge \epsilon \sigma(Q_1)$;
\item [(6).] For any $Q \in \calD_k$, $B(c(Q), \del^k)  \subset Q \subset B(c(Q), \frakC\del^k)$.
\end{enumerate}
\end{thm}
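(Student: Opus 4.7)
The plan is to implement the standard construction of dyadic cubes on a space of homogeneous type: build a system of cubes scale-by-scale from maximal $\del$-separated nets of centers, organize them into a tree so that (1)--(4) hold by construction, then read off the geometric statement (6) from the quasi-metric inequality and the measure statement (5) from doubling. First I would fix $\del \in (0,1)$ small enough in terms of the quasi-metric constant $\kappa$ (for instance $\del \le 1/(12\kappa^4)$), and for each $k \ge 1$ use Zorn's lemma to select a maximal $\del^k$-separated subset $\{z_\alpha^k\}_{\alpha \in I_k} \subset b\Omega$: every point of $b\Omega$ lies within $\rho$-distance $\del^k$ of some $z_\alpha^k$ (covering), and distinct centers at scale $k$ are at least $\del^k$ apart (separation). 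Well-order each $I_k$ and, for every $\alpha \in I_{k+1}$, assign a parent $\pi(\alpha) \in I_k$ by taking the minimal $\beta$ with $\rho(z_\alpha^{k+1}, z_\beta^k) < \del^k$ (existence by covering); this produces a tree on $\bigsqcup_k I_k$.

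Second, I would define the cubes $\{Q_\alpha^k\}$ by a nearest-neighbor partition and then correct for cross-scale nesting. A naive Voronoi partition at each scale satisfies (1)--(2), but generally fails the compatibility $Q_\alpha^{k+1} \subset Q_{\pi(\alpha)}^k$, since a point's nearest scale-$(k+1)$ center need not be a tree-child of its nearest scale-$k$ center. The standard fix is a simultaneous tree-driven refinement: assign each $x \in b\Omega$ first to its nearest coarsest center, then push its assignment down along the parent map, so that membership in $Q_\alpha^k$ is determined by the ancestral chain rather than by independent Voronoi decisions at each scale. With this setup (1)--(4) hold tautologically from the tree structure, and we set $c(Q_\alpha^k) := z_\alpha^k$.

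Third, the geometric statement (6) comes from telescoping along the ancestral chain. The inclusion $B(c(Q), \del^k) \subset Q$ uses separation: any $y$ with $\rho(y, z_\alpha^k) < \del^k$ cannot be closer in $\rho$ to another scale-$k$ center (by the quasi-triangle inequality and the choice of $\del$), so $y$ is assigned to $z_\alpha^k$ at scale $k$, and the ancestral-chain definition propagates the assignment to finer scales. For the outer inclusion $Q \subset B(c(Q), \frakC \del^k)$, each $x \in Q_\alpha^k$ has a descendant sequence of centers $z_{\beta_j}^{k+j}$ with $\rho(z_{\beta_j}^{k+j}, z_{\beta_{j+1}}^{k+j+1}) < \del^{k+j}$, so iterating the quasi-triangle inequality with constant $\kappa$ and summing the resulting geometric series (which converges precisely because $\del$ was chosen small relative to $\kappa$) gives $\rho(x, z_\alpha^k) \le \frakC \del^k$ for an explicit $\frakC = \frakC(\kappa, \del)$.

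Finally, (5) follows from (6) combined with doubling of $\sig$: if $Q_2$ is a child of $Q_1$ at consecutive scales $k+1, k$, then $B(c(Q_2), \del^{k+1}) \subset Q_2 \subset Q_1 \subset B(c(Q_1), \frakC \del^k)$, and since $\rho(c(Q_1), c(Q_2)) < \del^k$ the outer ball is contained in a ball about $c(Q_2)$ of radius comparable to $\del^k$; iterating $\sig(B(\cdot, 2r)) \le K \sig(B(\cdot, r))$ a bounded number of times (depending only on $\del, \frakC, K, \kappa$) yields $\sig(Q_2) \ge \epsilon \sig(Q_1)$ for a universal $\epsilon > 0$. The main obstacle is the cross-scale nesting repair in the second step: the raw Voronoi cells partition $b\Omega$ cleanly at each scale but do not align across scales, and the tree-driven reorganization must be executed carefully so that a single rule determines membership at every scale while the ball-containment estimates needed for (6) survive intact.
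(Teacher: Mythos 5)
The paper offers no proof of this theorem: it is quoted (up to normalization of constants) from Hyt\"onen--Kairema \cite{HK12} and \cite{ACM15} and applied as a black box to the space of homogeneous type $(b\Omega,\rho,\sigma)$. Your outline is precisely the construction in those references --- maximal $\del^k$-separated nets, a parent map, tree-driven cubes, telescoping for the outer containment, doubling for (5) --- so you are reproducing the proof the paper points to rather than taking a different route.

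Two places in your sketch need repair before it is a proof. First, the inner containment: from $\rho(y,z_\alpha^k)<\del^k$ and the separation $\rho(z_\alpha^k,z_\beta^k)\ge\del^k$, the quasi-triangle inequality gives only $\rho(y,z_\beta^k)\ge \del^k/\kappa-\del^k$, which is vacuous for $\kappa>1$; a point of $B(z_\alpha^k,\del^k)$ may therefore be closer to a different scale-$k$ center, and your claim that it ``cannot be closer'' fails as stated. The construction yields $B(c(Q),c_1\del^k)\subset Q$ with $c_1$ of order $\kappa^{-2}$ (this is how \cite{HK12} state it); the radius-$\del^k$ form in the theorem is a normalization that must either be built in (by making the nets more separated relative to the covering radius) or absorbed by renaming constants. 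Moreover, ``the ancestral-chain definition propagates the assignment to finer scales'' is not automatic: one must show that every fine-scale center near $y$ has $z_\alpha^k$ as its scale-$k$ ancestor, which again requires the geometric-series estimate combined with separation, not just the scale-$k$ nearest-center decision. Second, the cross-scale nesting repair that you correctly identify as the main obstacle is exactly where \cite{HK12} do the real work (closed, open, and half-open cubes built from a partial order on the index set, with a measurable partition selected in between); saying that ``a single rule determines membership at every scale'' names the difficulty without resolving it. Both issues are standard and fixable, but as written the proposal is an outline of the cited construction rather than a complete argument.
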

We will refer to the last property as the \emph{sandwich property}. The sets $Q \in \calD$ are referred to as \emph{dyadic cubes} with center $c(Q)$ and sidelength $\ell(Q)=\del^k$, but we must emphasize that these are in general not cubes in $\R^n$ or any balls $B(\zeta, \varepsilon)$ induced by $\rho$. Moreover, we may also assume that $\del$ is sufficient small, more precisely, we require
\begin{enumerate}
    \item [(a).] $96\kappa^6\del \le 1$. Indeed, this can be achieved by replacing $\del$ by $\del^N$ and $\calD_k, k \in \Z$ by $\overline{\calD}_k:=\calD_{kN}, k \in \Z$,  for $N$ large such that $96\kappa^6\del^N \le 1$;

\item [(b).] $\del \le \frac{1}{100 C_\Omega}$, where $C_\Omega$ is some constant only depends on $\Omega$, such that
    $$
    \frac{1}{C_\Omega} |r(z)| \le \textrm{dist}(z, b\Omega) \le C_\Omega |r(z)|, \quad z \in \Omega. 
    $$
    \end{enumerate}

Such a choice of $\del$ allows us to treat all the balls $B(\zeta, \varepsilon)$ on $b\Omega$ in a dyadic way. 

\begin{prop}[{\cite[Theorem 4.1]{HK12}}] \label{TLT}
There exists a finite collection of dyadic grids $\calD^t, t=1, \dots, K_0$, such that for any ball $B=B(\xi, \varepsilon) \subset b\Omega$, there exists a dyadic cube $Q \in \calD^t$, such that
$$
B \subseteq Q \quad \textrm{and} \quad \ell(Q) \le \widetilde{\frakC}\varepsilon.
$$
Here,  $\widetilde{\frakC}$ is an absolute constant which only depends on $\kappa$ and $\del$. Moreover, the constants $\frakC_t, \del_t$ and $\varepsilon_t$ constructed in Theorem \ref{dyadicSHT} can be taken to be the same, that is, $\frakC_1=\frakC_2=\dots=\frakC_{K_0}$, $\del_1=\del_2=\dots=\del_{K_0}$ and $\varepsilon_1=\varepsilon_2=\dots=\varepsilon_{K_0}$. 
\end{prop}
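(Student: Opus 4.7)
The statement is essentially \cite[Theorem 4.1]{HK12}, so the plan is to adapt the construction and argument given there to our space of homogeneous type $(b\Omega, \rho, \sigma)$. First, I would revisit the construction of a single dyadic grid in Theorem \ref{dyadicSHT}: each grid is built from a nested sequence of maximal $\del^k$-separated nets in $b\Omega$, and the dyadic cubes are essentially Voronoi-type cells around these net points. The flexibility in the construction comes from the freedom to choose the reference net at each scale, and varying this choice produces distinct grids with identical structural constants.

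Next, I would construct $K_0$ grids $\calD^1, \ldots, \calD^{K_0}$ by making systematic ``shifted'' choices of net configurations. The key combinatorial lemma, which forms the heart of \cite[Theorem 4.1]{HK12}, asserts that $K_0$ may be taken to depend only on $\kappa$ and on the doubling constant of $\sigma$, and that these grids together form an \emph{adjacent dyadic system}: given a ball $B = B(\xi, \varepsilon)$, one chooses the unique integer $k$ with $\del^{k+N} < \varepsilon \le \del^{k+N-1}$ for a suitable universal $N = N(\kappa)$, and then shows that for at least one $t \in \{1, \dots, K_0\}$ there is a cube $Q \in \calD^t$ of generation $k$ whose center $c(Q)$ lies sufficiently close to $\xi$ so that the sandwich property $Q \supseteq B(c(Q), \del^k)$, combined with the quasi-triangle inequality, forces $B \subseteq Q$. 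Taking $\widetilde{\frakC}$ of order $\del^{-N}$ then yields $\ell(Q) = \del^k \le \widetilde{\frakC}\varepsilon$.

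The uniformity of the constants $\frakC_t$, $\del_t$, $\varepsilon_t$ across the $K_0$ grids is essentially automatic from the construction: all grids live inside the same fixed SHT, and in the proof of Theorem \ref{dyadicSHT} these constants depend only on $\kappa$ and on the doubling constant of $\sigma$. Only the reference nets differ between grids, and this does not influence the structural constants, so a single common choice of $(\frakC, \del, \varepsilon)$ can be made from the outset.

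I expect the main obstacle to be the combinatorial step establishing the adjacent dyadic system property with a \emph{finite} $K_0$. Concretely, one must enumerate the possible shifted net-configurations modulo the equivalence induced by the sandwich property and then run a pigeonhole-type covering argument, using the doubling property of $\sigma$, to guarantee that every ball is absorbed into some $\calD^t$. Since this argument is carried out in detail in \cite[Section 4]{HK12}, my plan would be to invoke that result directly and verify only the minor translation required to pass from an abstract SHT to $(b\Omega, \rho, \sigma)$.
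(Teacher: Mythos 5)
Your proposal is correct and takes essentially the same route as the paper: Proposition \ref{TLT} is quoted directly from \cite[Theorem 4.1]{HK12} with no proof supplied, and your plan likewise reduces to invoking that theorem for the space of homogeneous type $(b\Omega,\rho,\sigma)$, with your sketch of the shifted-net construction and the pigeonhole/doubling argument being a faithful outline of what Hyt\"onen--Kairema actually do. No gap to report.
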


%---------------------------------------------------------------------------------------------------
\medskip
\section{Dyadic flow tents and the dyadic structure on $\Omega$}

We start with recalling the \emph{Carleson extension} from classical harmonic analysis: given a ball $B$ in $\R^n$ with radius $r>0$, the \emph{Carleson tent} over $\B$ is defined to be the ``cylindrical set"
$$
T(B):=\left\{(x, t) \in \R^{n+1}: x \in B, \ 0 \le t \le r \right\}.
$$
This simple construction plays an important role in studying various problems from harmonic analysis, for example, the $BMO$-functions, the $T(1)$ theorem and Kato's problems (see, e.g., \cite{G14b}). 

The goal of this section is to generalize the notion of Carleson extension to the convex domain of finite type and construct a collection of ``\emph{dyadic tents}" inside such domains quantitatively. Priorly, these dyadic tents give a partition of the domain $\Omega$, while the interesting feature for this partition is that it adapts to the Bergman kernel $K(\cdot, \cdot)$, more precisely, we have for any $z, \xi \in \Omega$, \begin{equation} \label{keyeq01}
|K(z, \xi)| \lesssim \frac{1}{\textrm{Vol}(T_d(Q))}, 
\end{equation}
where $Q$ is some dyadic cube on $b\Omega$ and $T_d(Q)$ is the dyadic tent associated to $Q$ that contains both $z$ and $\xi$,  and ``Vol" is the standard Euclidean volume form. Therefore, these dyadic tents provide the proper underlying geometry and dyadic structure of the Bergman projection. 

Here is some motivation on how to construct these dyadic tents. We start with some dyadic system $\calD$ on $b\Omega$ and we expect to mimic the construction in $\R^n$ for cubes from all the generations in $\calD$, so that the estimate \eqref{keyeq01} holds uniformly for all these cubes. However, since there are infinitely many ``scales" (that is, $\del^k, k \ge 1$, the sidelength of these cubes), the simple compactness argument for $\Omega$ is not strong enough to guarantee such a uniform estimate. For this purpose, one possibility is to borrow the idea from the scaling map technique in sub-Riemannian geometry, which was introduced by Nagel, Stein and Waigner in \cite{NSW85}, and later developed by Stein, Stovall and Street in \cite{SS18, S11, S12, SS12, SS13}. More precisely, let $Q \in \calD$ and 
$$
\Phi_{c(Q), \ell(Q)}: \B^{2n-1}(1) \rightarrow U_{c(Q)}
$$
be a coordinate chart (in particular, we may also assume $\Phi_{c(Q), \ell(Q)}$ is one-to-one), where $\B^{2n-1}(1)$ is the unit ball in $\R^{2n-1}$, $U_{c(Q)}$ is a neighborhood of $c(Q)$ and $Q \subset \Phi_{c(Q), \ell(Q)}(B^{2n-1}(1/2))$. Then we may define the extension of $Q$ by
$$
T(Q):=\Phi_{c(Q), \ell(Q)}\left(\left\{ (\eta, t) \in \R^{2n}: \eta \in \Phi_{c(Q), \ell(Q)}^{-1}(Q), \ 0 \le t \le \frac{1}{2n} \right\}\right).
$$
This motivates us to consider the tents induced by the normal gradient flow of $r$ around $b\Omega$, that is, the dyadic flow tents. Moreover, we are also able to show the dyadic flow tents are equivalent to the dyadic projection tents, which are constructed directly by extending the dyadic cube on $b\Omega$ inside $\Omega$, with respect to its sidelength.  

\subsection{Dyadic projection tents and dyadic flow tents}

We start by defining the dyadic tents with respect to a dyadic cube on $b\Omega$.

\begin{defn} \label{dyadictent01}
Let $Q \in \calD$ be a dyadic cube with $\ell(Q)$ sufficiently small. Then the \emph{dyadic projection tent} associated to $Q$ is defined as
$$
T^{proj}_d(Q):=\left\{z \in \Omega \cap U: \Pi^{proj}(z) \in Q, \ r(z) \in (-\ell(Q), 0) \right\},
$$
where $U$ is the neighborhood of $b\Omega$ fixed in Definition \ref{MStent01} and $\Pi^{proj}$ is the  nearest point projection that maps $\Omega \cap U$ to $b\Omega$. Note that we may assume $U$ is sufficiently small, so that $\Pi^{proj}$ is well-defined (see, e.g., \cite[Page 110]{H76}).  
\end{defn}  

Our next goal is to define the dyadic flow tents, which play a crucial role in this paper. We start by defining the projection induced by the negative gradient flow in short time. Throughout this paper, we may assume that $\frac{2}{3} \le |\nabla r| \le \frac{3}{2}$ on $U$ and in particular that $|\nabla r|=1$ on $b\Omega$.

Let $\varphi: V \times [0, \tau) \to U \cap \overline{\Omega}$ be the negative gradient flow associated to  the vector field $-\frac{\nabla r}{|\nabla r|^2}$. That is, $\varphi$ is the unique short time solution of the ODE
\begin{equation} \label{norgradflow}
\frac{d\varphi}{dt}(x, t)=-\frac{\nabla r}{|\nabla r|^2} \left(\varphi(x, t)\right), \quad x \in V, t \in [0, t_0), 
\end{equation}
with the initial condition $\varphi(x, 0) \equiv x$, where 
\begin{enumerate}
    \item[(a).]  $V \subset U$ is a neighborhood of $b\Omega$ and $U$ is the neighborhood of $b\Omega$ in Definition \ref{MStent01}; 
    \item[(b).]  $[0, t_0 )$ is a short time interval for some $t_0 >0$ sufficiently small. 
\end{enumerate}

\medskip

 The Picard-Lindel\"of theorem asserts that $\varphi_t := \varphi(\cdot, t) : V \rightarrow U \cup \Omega$ is a diffeomorphism onto its image $\varphi(V, t)$ for every $t \in [0, t_0)$. In particular, $\varphi$ is an isotopy from the boundary $b\Omega$ to the level sets 
 $$
 b\Omega_t := \{ z \in \C^n : r(z) = -t\},
 $$
 which are equal to the images $\varphi(b\Omega, t)$. Indeed, for any $x_0 \in b\Omega$ and $t \in [0, t_0)$, we have
\begin{eqnarray} \label{cal01}
r(\varphi(x_0, t))%
&=& r(\varphi(x_0, t))-r(\varphi(x_0, 0))= \int_0^\eta \frac{d}{dt} \left(r(\varphi(x, s))\right)ds \\
&=& \int_0^t \nabla r(\varphi(x_0, s)) \cdot \frac{d\varphi}{dt}(x, s)ds \nonumber \\
&=& -\int_0^t \nabla r(\varphi(x_0, s)) \cdot \frac{\nabla r(\varphi(x_0, s))}{|\nabla r(\varphi(x_0, s))|^2}ds \nonumber\\
&=& -t.  \nonumber
\end{eqnarray}

\begin{defn}
Let $\Pi^{flow}: V \to b\Omega$ be projection to the boundary $b\Omega$ along the flow lines of the vector field $-\frac{\nabla r}{|\nabla r|^2}$, given by
$$
\Pi^{flow}(x):=\varphi^{-1}_{r(z)}(z), \quad z \in V.
$$
In particular, it sends the level sets $b\Omega_t$ to $b\Omega$ for each $t \in [0, t_0)$ (see, \eqref{cal01}). 
\end{defn}

We are ready to define the dyadic flow tents using $\Pi^{flow}$. 

\begin{defn}
Given  a dyadic cube $Q \in \calD$ on $b\Omega$ with $\ell(Q)$ sufficiently small,  \emph{dyadic flow tent} associated to $Q$ is defined to be
$$
T_d^{flow}(Q):=\left\{z \in V: \Pi^{flow}(z) \in Q, r(z) \in (-\ell(Q), 0) \right\}. 
$$
Moreover, we also define $T_d^{flow}(b\Omega):=\Omega$.
\end{defn}

The goal now is to show that these dyadic flow tents give a nice dyadic structure inside $\Omega$. We start by understanding how to calculate the volume of such cubes. This plays an important role later as it allows us to view the collections of the dyadic flow tents as a sparse collection. 

To do this, we need some knowledge from geometric analysis. The setting is as follows. Let $Q \in \calD$ be a dyadic cube with center $c(Q) \in b\Omega$. We take $F: U_Q \subset \R^{2n-1} \to b\Omega$ be local chart near $c(Q)$, with
\begin{enumerate}
    \item [(1).] $U_Q$ and $F(U_Q)$ are diffeomorphic via $F$;
    \item [(2).] $Q \subseteq F(U_Q)$.
\end{enumerate}
Note that we may choose $\del$ as small as possible so that the above conditions hold. Next we define 
$$
\calF_t(p)=\calF(p, t):=\varphi(F(p), t), \quad p \in U_Q, t \in [0, t_0)
$$
to be the \emph{local parametrization} of the level sets $\calF(U_Q, t) \subset b\Omega_t$ smoothly indexed by $t$. Here are some basic properties for $\calF$.
\begin{enumerate}
    \item[(1).] $U_Q$ and $\calF_t(U_Q)$ are diffeomorphic via $\calF_t$; 
    \item [(2).] For any $p \in V$ and $t \in (0, t_0)$, 
    $$
    r(\calF(p, t))=-t.
    $$
    Indeed, this is an easy consequence of \eqref{cal01};
    \item [(3).] $\calF$ satisfies the evolution equation: for any $p \in V$ and $t \in (0, t_0)$, 
    \begin{equation} \label{20191218eq01}
    \frac{d \calF}{dt}(p, t)=-\frac{\nabla r}{|\nabla r|^2} \left(\calF(p, t) \right). 
    \end{equation}
    Indeed, by \eqref{norgradflow}, we have
    \begin{eqnarray*}
    \frac{d \calF}{dt}(p, t)%
    &=& \frac{d \varphi(F(\cdot), \cdot)}{dt}(p, t)=\frac{d\varphi}{dt}\left(F(p), t\right) \\
    &=& -\frac{\nabla r}{|\nabla r|^2}(\varphi\left(F(p), t\right)) \\
    &=& -\frac{\nabla r}{|\nabla r|^2} \left(\calF(p, t) \right). 
    \end{eqnarray*}
\end{enumerate}

Using the local parametrization $\calF$, we calculate the evolution of area elements along the normal gradient flow. Let us recall several basic definitions first, and for this part, we use standard geometric analysis notation for partial derivatives ( $\partial_i=\frac{\partial}{\partial x_i}$) and index notation for tensors (see, \cite{E04, J17}). For each $t \in [0, t_0)$, the \emph{metric} on $\calF(U_Q, t)$ is given by
$$
g_{ij, t}=\partial_i \calF_t \cdot \partial_j \calF_t
$$
for $1 \le i, j \le 2n-1$, the \emph{inverse metric} by 
$$
\left(g^{ij}_t \right)=\left(g_{ij, t}\right)^{-1}
$$
and the \emph{area element} of $\calF(U_Q, t)$ by 
$$
\sqrt{g_t}=\sqrt{\textrm{det} \left( g_{ij, t} \right)}.
$$
Let further, $\nu=\frac{\nabla r}{|\nabla r|}$ be the unit normal field to $b\Omega_t$ (note that the choice of $\nu$ is independent of the choice of the local parametrization $\calF$). The \emph{second fundamental form} of $\calF(U_Q, t)$ is defined by 
$$
A_{ij, t}:=\partial_i \nu \cdot \partial_j \calF_t=-\nu \cdot \partial_i \partial_j \calF_t, 
$$
for $1 \le i, j \le 2n-1$, and the \emph{mean curvature} is then defined by
$$
H_t:=\sum_{i=1}^{2n-1} \sum_{j=1}^{2n-1} g^{ij}_t A_{ij, t}. 
$$

\begin{rem}
\begin{enumerate}
    \item [(1).] It is clear that $A_{ij, t}=A_{ji, t}$ for any $t \in [0, t_0)$ and $1 \le i, j \le 2n-1$;
    \item [(2).] It is an important fact in geometric analysis that for each $t \in [0, t_0)$,  the mean curvature (more precisely, $H_t \circ \calF_t^{-1}$) is independent of the choice of local charts, that is, the choice of $\calF_t$ (see, e.g. \cite[Appendix A]{E04}). 
\end{enumerate}
\end{rem}

The proof of the following result is standard, and to be self-contained, we include the proof here for the convenience of the reader. Moreover, we remark that it is important for us that the implicit constant is independent of the choice of the cube $Q$ and the parametrization $F$.

\begin{prop} \label{evovolumeform}
There exists a constant $C>0$, independent of the choice of $Q$ and $F$, such that for any $t \in [0, t_0)$, the following estimate holds:
\begin{equation} \label{20191218eq04}
e^{-Ct} \sqrt{g_0} \le \sqrt{g_t} \le e^{Ct} \sqrt{g_0}. 
\end{equation}

\end{prop}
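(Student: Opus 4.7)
The plan is a standard first-variation-of-area calculation, translated to our setting where the hypersurface evolution is driven by the normal gradient flow of $r$. The key is to compute $\frac{d}{dt}\log\sqrt{g_t}$ explicitly in terms of the mean curvature and $|\nabla r|$, and then integrate.

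First I would differentiate $g_{ij,t} = \partial_i\calF_t\cdot\partial_j\calF_t$ in $t$ and commute $\partial_t$ with $\partial_i$. Using the evolution equation \eqref{20191218eq01} and writing $\frac{d\calF}{dt} = -\frac{1}{|\nabla r|}\nu$ (evaluated along the flow), one obtains
\[
\partial_i\frac{d\calF}{dt} = -\partial_i\!\left(\tfrac{1}{|\nabla r|}\right)\nu - \tfrac{1}{|\nabla r|}\,\partial_i\nu.
\]
Since $\nu$ is normal to the level set $\calF_t(U_Q)\subset b\Omega_t$, we have $\nu\cdot\partial_j\calF_t=0$, so the first term drops out after dotting with $\partial_j\calF_t$, leaving
\[
\partial_i\tfrac{d\calF}{dt}\cdot\partial_j\calF_t \;=\; -\tfrac{1}{|\nabla r|}\,\partial_i\nu\cdot\partial_j\calF_t \;=\; -\tfrac{1}{|\nabla r|}\,A_{ij,t}.
\]
Symmetrizing in $i,j$ gives $\frac{d}{dt}g_{ij,t} = -\frac{2}{|\nabla r|}A_{ij,t}$.

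Next I would apply Jacobi's formula $\frac{d}{dt}\log\det(g_t) = g_t^{ij}\frac{d}{dt}g_{ij,t}$ to obtain
\[
\frac{d}{dt}\log\sqrt{g_t} \;=\; \tfrac{1}{2}g_t^{ij}\,\tfrac{d}{dt}g_{ij,t} \;=\; -\tfrac{H_t}{|\nabla r|},
\]
where $|\nabla r|$ is evaluated at $\calF(p,t)$. Integrating from $0$ to $t$ yields the clean identity
\[
\sqrt{g_t} \;=\; \exp\!\left(-\int_0^t \frac{H_s}{|\nabla r|}\Big|_{\calF(p,s)}\,ds\right)\sqrt{g_0}.
\]

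The final step is a uniform pointwise bound $\bigl|H_s/|\nabla r|\bigr|\le C$ on $V\cap\{|r|\le t_0\}$. This is where I want the constant $C$ to be independent of $Q$ and $F$. The denominator is controlled by the global assumption $|\nabla r|\ge 2/3$ on $U$. For the numerator, the point is that $H_s\circ\calF_s^{-1}$ is the mean curvature of the hypersurface $b\Omega_s$ as an intrinsic geometric quantity, computable by any smooth local parametrization, and hence its pointwise values are a fixed smooth function of position that depends only on $r$ (not on $Q$ or $F$). Since $V$ can be taken to have compact closure in $U$ and $r$ is smooth, this mean curvature is bounded uniformly on $V\cap\{|r|\le t_0\}$ by a constant depending only on $\Omega$ and $r$. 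Combining the two bounds gives $|H_s/|\nabla r||\le C$, and exponentiating produces \eqref{20191218eq04}.

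The only subtle point is the parametrization-independence of $H_t$, which is precisely the content of the remark preceding the proposition; everything else is a mechanical application of the first variation formula. No step strikes me as a genuine obstacle, but care is needed to track that the constant $C$ depends only on a neighborhood-independent quantity (the $C^2$-norm of $r$ on $\overline{V}$ and the lower bound on $|\nabla r|$), not on the scale $\ell(Q)$ or the specific chart $F$.
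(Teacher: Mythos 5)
Your proposal is correct and follows essentially the same route as the paper: differentiate $g_{ij,t}$ along the flow to get $\partial_t g_{ij,t}=-2A_{ij,t}/|\nabla r|$, pass to $\partial_t\sqrt{g_t}=-H_t\sqrt{g_t}/|\nabla r|$ via Jacobi's formula, and conclude from a uniform bound on $H_t/|\nabla r|$ (the paper invokes an explicit formula for the mean curvature of level sets in terms of $\nabla r$ and $\mathrm{Hess}(r)$, and closes with Gronwall rather than the exact exponential integral, but these are cosmetic differences).
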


\begin{proof}
For each $1 \le i, j \le 2n-1$ and $t \in [0, t_0)$, we have
\begin{eqnarray*}
\partial_t \left(g_{ij, t} \right)%
&=& \partial_t \left(\partial_i \calF_t \cdot \partial_j \calF_t \right) \\
&=& \partial_t \partial_i \calF_t \cdot \partial \calF_j+ \partial_i \calF_t \cdot \partial_t\partial_j \calF_t \\
&=& \partial_i \partial_t \calF_t \cdot \partial \calF_j+ \partial_i \calF_t \cdot \partial_j\partial_t \calF_t \\
&=& \partial_i \left(-\frac{\nabla r}{|\nabla r|^2}\left(\calF_t \right) \right) \cdot \partial_j \calF_t+\partial_i \calF_t \cdot \partial_j  \left(-\frac{\nabla r}{|\nabla r|^2}\left(\calF_t \right) \right) \\
&& \quad \quad (\textrm{by \eqref{20191218eq01}.} ) \\
&=& \partial_i \left(-\frac{\nu}{|\nabla r|}\left(\calF_t \right) \right) \cdot \partial_j \calF_t+\partial_i \calF_t \cdot \partial_j  \left(-\frac{\nu}{|\nabla r|}\left(\calF_t \right) \right) \\
&=&-\frac{\partial_i \nu \cdot \partial_j \calF_t+\partial_i\calF_t \cdot \partial_j \nu}{|\nabla r|}\\
&& \quad \quad (\textrm{by the fact that $\nu \cdot \partial_j \calF_t=0, 1 \le j \le 2n-1$.}) \\
&=& -\frac{2A_{ij, t}}{|\nabla r|},
\end{eqnarray*}
which implies
\begin{eqnarray} \label{20191218eq02}
\partial_t \sqrt{g_t}%
&=& \frac{1}{2\sqrt{g_t}} \cdot \partial_t g_t \nonumber \\
&=& \frac{1}{2\sqrt{g_t}} \cdot \sum_{i=1}^{2n-1} \sum_{j=1}^{2n-1} g_t g^{ij}_t \partial_t(g_{ij, t}) \nonumber \\
&=& -\frac{\sqrt{g_t}}{|\nabla r|} \sum_{i=1}^{2n-1} \sum_{j=1}^{2n-1} g_t^{ij} A_{ij, t} \nonumber\\
&=&-\frac{H_t \sqrt{g_t}}{|\nabla r|}.
\end{eqnarray}
We claim that 
\begin{equation} \label{20191218eq03}
|H_t| \le C
\end{equation}
for $C>0$ independent of the choice of $Q, F$ and $t$. Indeed, by \cite[(3.6)]{G05}, we have
$$
H_t \circ \calF_t^{-1}=\frac{\left(\nabla r\right) \textrm{Hess}(r) \left(\nabla r\right)^T-|\nabla r|^2 \textrm{Trace}\left({\textrm{Hess}(r)}\right)}{2|\nabla r|^3},
$$
where $\textrm{Hess}(r)$ is the Hessian matrix of $r$. It is then clear that 
$$
|H_t \circ \calF_t^{-1} (z)| \le C,  \quad z \in \calF(U_Q, t), 
$$
for $C>0$ only depending on the $C^2$-norm of $r$ on $U$, and any dimension constants, which implies the desired claim. 

Therefore, combining \eqref{20191218eq02} and \eqref{20191218eq03}, we have the differential inequalities
$$
-2C \sqrt{g_t} \le \partial_t \sqrt{g_t} \le 2C \sqrt{g_t}. 
$$
The desired estimate \eqref{20191218eq04} then follows from an application of Gronwall's inequality. 
\end{proof}

As an application of the evolution of the volume form, we can estimate the volume of dyadic flow tents in a clean way. 

\begin{prop} \label{nearbouvolume}
There exists a $\tau_1>0$, independent of the choice of $Q$, such that for any $Q \in \calD$ and $t \in [0, \tau_1)$, the estimate 
$$
\frac{\sigma(Q)t}{2} \le \textrm{Vol} \left( \left\{z \in \Omega: \Pi^{flow}(z) \in Q, r(z) \in [-t, 0) \right\} \right) \le 2\sigma(Q)t. 
$$
holds. In particular, if $\ell(Q) \le \tau_1$, then
$$
\textrm{Vol}(T^{flow}_d(Q)) \simeq \sigma(Q)\ell(Q). 
$$
\end{prop}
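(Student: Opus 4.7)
The plan is to parametrize the set $T^t(Q) := \{z \in \Omega : \Pi^{flow}(z) \in Q,\ r(z) \in [-t,0)\}$ by the local chart $\calF$ and compute its volume via a change of variables. Fix a local chart $F : U_Q \to b\Omega$ as in the setup preceding Proposition \ref{evovolumeform}, so that $Q \subseteq F(U_Q)$, and consider $\calF(p,s) = \varphi(F(p),s)$. Because $\Pi^{flow}(\varphi(x,s)) = x$ for $x \in b\Omega$ and $r(\varphi(x,s)) = -s$ by \eqref{cal01}, we have
\[
T^t(Q) = \calF\bigl(F^{-1}(Q) \times [0,t]\bigr),
\]
and $\calF$ is a diffeomorphism onto its image for small $t$ (by the Picard--Lindel\"of theorem applied to the flow together with the injectivity of $F$). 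The volume of $T^t(Q)$ is then an iterated integral against the Jacobian of $\calF$.

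To compute that Jacobian, note that for each fixed $s$, the vectors $\partial_i \calF_s$, $i=1,\dots,2n-1$, span the tangent space $T_{\calF(p,s)} b\Omega_s$, while by \eqref{20191218eq01}, $\partial_s \calF(p,s) = -\nabla r/|\nabla r|^2$ is normal to $b\Omega_s$ with Euclidean norm $1/|\nabla r|$. Consequently,
\[
\bigl|\det D\calF(p,s)\bigr| = \frac{\sqrt{g_s(p)}}{|\nabla r(\calF(p,s))|}.
\]
By Proposition \ref{evovolumeform}, $e^{-Cs}\sqrt{g_0(p)} \le \sqrt{g_s(p)} \le e^{Cs}\sqrt{g_0(p)}$, with $C$ independent of $Q$ and $F$. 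Using $|\nabla r| = 1$ on $b\Omega$ and the assumed smoothness of $r$, for $z$ at distance $\le s$ from $b\Omega$ we have $|\nabla r(z)| = 1 + O(s)$ uniformly. Thus we may choose a single $\tau_1 > 0$, independent of $Q$, such that
\[
\tfrac{1}{2}\sqrt{g_0(p)} \;\le\; \frac{\sqrt{g_s(p)}}{|\nabla r(\calF(p,s))|} \;\le\; 2\sqrt{g_0(p)} \qquad \text{for all } s \in [0,\tau_1),\ p \in F^{-1}(Q).
\]

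Integrating and using that $\sigma(Q) = \int_{F^{-1}(Q)} \sqrt{g_0(p)}\,dp$ (which is intrinsic to $b\Omega$ and hence independent of the choice of chart $F$), we obtain
\[
\tfrac{\sigma(Q) t}{2} \;\le\; \mathrm{Vol}(T^t(Q)) = \int_0^t \!\!\int_{F^{-1}(Q)} \frac{\sqrt{g_s(p)}}{|\nabla r(\calF(p,s))|}\, dp\, ds \;\le\; 2\sigma(Q) t
\]
for $t \in [0,\tau_1)$, which is the first claim; taking $t = \ell(Q)$ when $\ell(Q) \le \tau_1$ yields the second. The main subtlety is ensuring that $\tau_1$ and the constants hidden in the Jacobian estimate can be taken independently of $Q$ and of the chart $F$: this is exactly what Proposition \ref{evovolumeform} provides for the $\sqrt{g_s}$ factor, while the $1/|\nabla r|$ factor is controlled by the global $C^2$-bounds on $r$ near $b\Omega$. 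No uniform control on $F$ itself is needed, since both the integrand and $\sigma(Q)$ transform identically under a change of chart.
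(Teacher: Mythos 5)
Your proof is correct and follows essentially the same route as the paper: both reduce the volume to the double integral $\int_0^t\int_{F^{-1}(Q)}\sqrt{g_s}/|\nabla r|\,dp\,ds$ and invoke Proposition \ref{evovolumeform} for the uniform control of $\sqrt{g_s}$, the only cosmetic difference being that the paper slices by level sets via the coarea formula while you compute the Jacobian of $\calF$ directly (the two factorizations of the Jacobian are the same). The paper also handles $|\nabla r|^{-1}$ via the standing bound $2/3\le|\nabla r|\le 3/2$ rather than your $1+O(s)$ expansion, but this changes nothing of substance.
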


\begin{proof}
This is a consequence of the coarea formula (see, e.g., \cite[Theorem 3.2.3]{F69} or \cite[Theorem 5.2.1]{KP08}). Denote 
$$
T_d(Q, t):=\left\{z \in \Omega: \Pi^{flow}(z) \in Q, r(z) \in [-t, 0) \right\},
$$
in particular, $T_d^{flow}(Q)=T_d(Q, \ell(Q))$. Then we have
\begin{eqnarray} \label{20191219eq01}
\textrm{Vol}(T_d(Q, t))%
&=& \int_{-t \le r(z) \le 0} \one_{\varphi(Q, [0, t))}(z)dV(z) \nonumber \\
&=& \int_{-t \le r(z) \le 0} \one_{\varphi(Q, [0, t))}(z) |\nabla r(z)|^{-1} |\nabla r(z)|dV(z) \nonumber \\
&=& \int_0^t\int_{b\Omega_s} \one_{\varphi(Q, s)}(z) |\nabla r(z)|^{-1}d\textrm{Vol}_{b\Omega_s}ds,
\end{eqnarray}
where $\one_S$ is the characteristic function of a given set $S$, and in the last equality, we apply the coarea formula and $dVol_{b\Omega_s}$ is the volume form on the level set $b\Omega_s$. Since $\frac{2}{3} \le |\nabla r| \le \frac{3}{2}$ on $U$, we see that for every $s \in (0, t]$, 
\begin{eqnarray} \label{20191219eq02}
\frac{2}{3} \int_{b\Omega_s} \one_{\varphi(Q, s)}(z)dVol_{b\Omega_s}%
&\le& \int_{b\Omega_s} \one_{\varphi(Q, s)}(z) |\nabla r(z)|^{-1}d\textrm{Vol}_{b\Omega_s} \nonumber \\
&\le& \frac{3}{2}  \int_{b\Omega_s} \one_{\varphi(Q, s)}(z)d\textrm{Vol}_{b\Omega_s}.
\end{eqnarray}
Applying the change of variable $z=\calF_s(p), p \in F^{-1}(Q) \subset \R^{2n-1}$, we see that
$$
\int_{b\Omega_s} \one_{\varphi(Q, s)}(z)d\textrm{Vol}_{b\Omega_s}=\int_{F^{-1}(Q)} \one_{F^{-1}(Q)}(p) \sqrt{g_s} dV_{2n-1}(p), 
$$
where $dV_{2n-1}$ is the standard Euclidean volume form on $\R^{2n-1}$. An application of Proposition \ref{evovolumeform} yields that
$$
e^{-Cs} \sigma(Q) \le \int_{b\Omega_s} \one_{\varphi(Q, s)}(z)d\textrm{Vol}_{b\Omega_s} \le e^{Cs} \sigma(Q), 
$$
where $C>0$ is the constant
Combining this estimate with \eqref{20191219eq01} and \eqref{20191219eq02}, we find that 
$$
\frac{2t e^{-Ct}}{3}\sigma(Q)\le \textrm{Vol}(T_d(Q, t)) \le \frac{3t e^{Ct}}{2}\sigma(Q).
$$
The desired result then follows if we choose $\tau_1>0$ sufficiently small, such that
$$
\frac{3e^{C\tau_1}}{2} \le 2 \quad \textrm{and} \quad \frac{2e^{-C\tau_1}}{3} \ge \frac{1}{2}.
$$
\end{proof}

We observe that as a consequence of Proposition \ref{TLT} and \cite[Lemma 2.7]{J10}, the dyadic projection tents and the McNeal-Stein tents are equivalent in the following sense: There exists a uniform constant $C_0>0$, such that for each $Q \in \calD$, 
$$
P_{C_0^{-1} \ell(Q)} (c(Q)) \cap \Omega \subseteq T_d^{proj}(Q) \subseteq P_{C_0 \ell(Q)}(c(Q)) \cap \Omega. 
$$
A natural question to ask is whether the dyadic flow tents are equivalent to the dyadic projection tents. The following result gives an affirmative answer to this question. Let us prove this result in a slightly more general setting. Let $\zeta \in b\Omega$ and $\varepsilon>0$. Define
$$
T_\varepsilon (\zeta) = \{ z \in \Omega:  \Pi (z) \in P_\varepsilon(\zeta) \cap b\Omega, \  r(z) \in (-\varepsilon, 0) \} 
$$
and 
$$
T^{flow}_\varepsilon (\zeta) = \{ z \in \Omega: \Pi^{flow} (z) \in P_\varepsilon(\zeta) \cap b\Omega, \  r(z) \in (-\varepsilon, 0)  \}. 
$$

\begin{thm} \label{projflow}
There exist constants $C_1>1$ and $\tau_2>0$ depending only on $r$ and $n$ such that for any $0<\varepsilon<\tau_2$ and $\zeta \in b\Omega$, 
$$
T_{\varepsilon}^{flow} (\zeta) \subseteq T_{C_1 \varepsilon} (\zeta) \quad \textrm{and} \quad  T_{ \varepsilon} (\zeta)\subseteq T^{flow}_{C_1\varepsilon}(\zeta). 
$$
In particular, for any $Q \in \calD$ with $\ell(Q) \le \frac{\tau_2}{\frakC}$, we have
$$
T_d^{proj}(Q) \subseteq T_{\frak C C_1 \ell(Q)}^{flow} (c(Q)) \quad \textrm{and} \quad T_d^{flow}(Q) \subseteq T_{\frak C C_1 \ell(Q)}^{proj} (c(Q)), 
$$
where $\frak C>1$ is the absolute constant defined in Theorem \ref{dyadicSHT}. 
\end{thm}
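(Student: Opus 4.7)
The plan is to deduce the theorem from the sharp Euclidean comparison
$$
|\Pi^{proj}(z) - \Pi^{flow}(z)| \lesssim |r(z)|^2
$$
for $z$ sufficiently close to $b\Omega$, combined with two standard features of the McNeal--Stein polydiscs: by \eqref{20191226eq01}--\eqref{20191226eq02}, the smallest width of $P_\eps(\eta)$ is $\tau(\eta,u_1,\eps) \simeq \eps$, so $P_\eps(\eta)$ contains a Euclidean $c_0 \eps$-ball around $\eta$ for some absolute $c_0 > 0$; and the engulfing property $\eta \in P_\eps(\zeta) \Rightarrow P_\eps(\eta) \subseteq P_{C_1 \eps}(\zeta)$ for some $C_1 \ge 1$ depending only on $r$ and $n$, which follows from the quasi-triangle inequality for $\rho$ together with the standard symmetrization $\eta \in P_\eps(\zeta) \Rightarrow \zeta \in P_{K_0 \eps}(\eta)$ coming from the minimal/extremal basis calculus of \cite{H02, NPT13}.

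To establish the Euclidean estimate, write $t := -r(z) > 0$, $p^{flow} := \Pi^{flow}(z)$, and $p^{proj} := \Pi^{proj}(z)$. Integrating the flow equation \eqref{norgradflow} from $0$ to $t$ and Taylor expanding the integrand at $s = 0$, using $|\nabla r|=1$ on $b\Omega$, gives
$$
z = p^{flow} - t\,\nu(p^{flow}) + O(t^2).
$$
For the nearest point projection, the identity $z = p^{proj} - d(z)\,\nu(p^{proj})$ with $d(z) = t + O(t^2)$ (from a one-step Taylor expansion of $r$ at $p^{proj}$) yields
$$
z = p^{proj} - t\,\nu(p^{proj}) + O(t^2).
$$
Subtracting and invoking the Lipschitz bound $|\nu(p^{proj}) - \nu(p^{flow})| \le L|p^{proj} - p^{flow}|$ yields
$$
|p^{proj} - p^{flow}| \le L\,t\,|p^{proj} - p^{flow}| + C t^2,
$$
and absorption for $t < 1/(2L)$ gives $|p^{proj} - p^{flow}| \le 2Ct^2$; the constants $L, C$ depend only on the $C^2$-norm of $r$ on $U$ and on $n$.

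To conclude, set $\tau_2 := \min\{c_0/(2C),\,1/(2L)\}$. For $\eps < \tau_2$ and $z \in T^{flow}_\eps(\zeta)$, we have $|p^{proj} - p^{flow}| \le 2C\eps^2 \le c_0 \eps$, hence $p^{proj} \in P_\eps(p^{flow})$, which combined with $p^{flow} \in P_\eps(\zeta)$ and engulfing gives $p^{proj} \in P_{C_1 \eps}(\zeta)$. Together with $r(z) \in (-\eps, 0) \subset (-C_1\eps, 0)$, this is exactly $z \in T_{C_1 \eps}(\zeta)$; the reverse inclusion is identical after swapping $p^{proj} \leftrightarrow p^{flow}$. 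For the ``in particular'' statement, the sandwich property in Theorem \ref{dyadicSHT}(6) gives $Q \subseteq P_{\frakC \ell(Q)}(c(Q)) \cap b\Omega$ whenever $\ell(Q) \le \tau_2/\frakC$, so that $T_d^{proj}(Q) \subseteq T_{\frakC\ell(Q)}(c(Q)) \subseteq T^{flow}_{\frakC C_1 \ell(Q)}(c(Q))$ by the first part; the other containment is analogous.

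The main obstacle is the \emph{quadratic} nature of the Euclidean estimate: a linear bound $|p^{proj} - p^{flow}| \lesssim |r(z)|$ would not suffice, because the complex normal width of $P_\eps(\zeta)$ is only of order $\eps$ and a boundary point at Euclidean distance $O(\eps)$ from $P_\eps(\zeta)$ may fall outside every fixed dilate $P_{C\eps}(\zeta)$, whereas the $O(\eps^2)$ gap is absorbed automatically for small $\eps$. The cancellation of the linear terms in the two Taylor expansions is a direct consequence of the normalization $|\nabla r| = 1$ on $b\Omega$ combined with the smoothness of $\nu$.
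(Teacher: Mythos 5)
Your proof is correct, and its core mechanism is the same as the paper's: compare $\Pi^{proj}(z)$ and $\Pi^{flow}(z)$ in Euclidean distance and absorb the discrepancy into the McNeal--Stein polydisc, using the fact that every half-width $\tau(\zeta,u_k,\varepsilon)$ is $\gtrsim\varepsilon$ together with the dilation/engulfing properties of the polydiscs from \cite{DFF99, J10}. Where you genuinely differ: (i) you prove the sharper quadratic bound $|\Pi^{proj}(z)-\Pi^{flow}(z)|\lesssim|r(z)|^2$ by Taylor-expanding both the flow and the signed distance at the boundary and absorbing the Lipschitz term in $\nu$, whereas the paper is content with the linear bound $4|r(z)|$ obtained by integrating the tangential component of the flow velocity; and (ii) you treat both inclusions symmetrically via engulfing, whereas the paper handles the second inclusion $T_\varepsilon(\zeta)\subseteq T^{flow}_{C_1\varepsilon}(\zeta)$ by a separate and more laborious argument, showing that points outside $P_{C_3\varepsilon}(\zeta)$ cannot project into $\tfrac12 P_{C_3\varepsilon}(\zeta)$ so that $\Pi^{proj}(\varphi(P_{C_3\varepsilon}(\zeta)\cap b\Omega,t))$ covers $P_\varepsilon(\zeta)\cap b\Omega$ on every level, plus an injectivity argument to upgrade this covering statement to the desired containment. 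Your symmetric treatment is cleaner and buys a shorter proof of the harder inclusion.

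One correction to your closing paragraph: the claim that a linear bound ``would not suffice'' is false, and the paper's own proof is the counterexample. By \eqref{20191226eq01} and \eqref{20191226eq02}, the \emph{smallest} half-width of $P_\varepsilon(\zeta)$ is $\tau(\zeta,u_1,\varepsilon)\simeq\varepsilon$ and all the others dominate it, so a Euclidean displacement of size $4\varepsilon$ contributes at most $4C'\tau(\zeta,u_k,\varepsilon)$ to each extremal coordinate; the displaced point therefore lies in $(1+4C')P_\varepsilon(\zeta)\subseteq P_{C_2\varepsilon}(\zeta)$. A boundary point at Euclidean distance $O(\varepsilon)$ from $P_\varepsilon(\zeta)$ thus always lies in a fixed dilate, contrary to what you assert. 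Your quadratic estimate is correct and a pleasant refinement, but it is not needed: your own argument would go through verbatim with the linear bound, at the cost of replacing $P_\varepsilon(p^{flow})$ by a bounded dilate.
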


\begin{proof}
 We first prove that there exists $C_2>1$ such that 
$$
 T_\varepsilon^{flow} (\zeta) \subset T_{C_2\varepsilon}(\zeta)
$$
for any $\zeta \in b\Omega$ and $\varepsilon$ small.

Let $z \in T_\varepsilon^{flow}(\zeta)$, that is, 
$$
z=\varphi(\zeta', t),
$$
for some $\zeta' \in B(\zeta, \varepsilon)=P_{\varepsilon}(\zeta) \cap b\Omega$,  where $t=-r(z) \in [0, \varepsilon]$. Moreover, we let
$$
\{u_1, \dots, u_n\}
$$
be the $\varepsilon$-Extremal basis associated to $\xi$.

Observe that the normal projection of $z$ is given by the flow of $\zeta'$ along $b\Omega$ determined by the tangential component of the velocity of $\varphi(\zeta',t)$. More precisely, $\zeta'$ flows to $\Pi^{proj}(\varphi(\zeta',t))$ along the vector field
\[ \bigg( \frac{d}{dt}\varphi(\zeta',t) \bigg)^{tang} = \left(\frac{-\nabla r}{|\nabla r|^2} (\varphi(\zeta',t)) \right)^{tang},
\]
where the upper-script ``\emph{tang}" stands for the tangential component with respect to the tangent space $T_{\Pi^{proj}(\varphi(x,t))} b\Omega$. 

Since $\frac{2}{3} \le |\nabla r| \leq \frac{3}{2}$ on $U$, it follows that $$
\left|\frac{\nabla r}{|\nabla r|^2} \right|< 4, \ \textrm{on} \ U. 
$$
Therefore, $\Pi^{proj}(\varphi(\zeta',t))$ is displaced from $\zeta'$ by at most an Euclidean distance of $4t$, since the intrinsic distance in $b\Omega$ (a.k.a, the Riemannian distance in $b\Omega$ as a Riemannian submanifold of $\R^{2n}$) will always be longer than extrinsic distance in $\C^n$ (a.k.a, the Euclidean distance in $\C^n$). In particular, this suggests that the displacements from the point $\zeta'$ along the directions $\{u_1, \dots, u_n\}$ are at most $4t$ (see, Figure \ref{Figure4}). 

\begin{figure}[ht]
\begin{tikzpicture}[scale=5]
\draw   plot[smooth,domain=-.7:1] (\x, {\x*\x/3});
\draw (0, 0) node [below] {$\zeta$}; 
\draw [red, line width = 0.50mm] plot[smooth,domain=-.5:.5] (\x, {\x*\x/3});
\fill [red] (.5, .25/3) circle [radius=.2pt];
\fill [red] (-.5, .25/3) circle [radius=.2pt];
\fill (0, 0) circle [radius=.5pt];
\draw (.6, -.09) node [below] {$B(\zeta, \varepsilon)$}; 
\draw (.4, .16/3) node [below] {$\zeta'$}; 
\draw (-.4, .4) node [right] {$\Omega$}; 
\draw (-.7, .4/3) node [below] {$b\Omega$}; 
\fill (.4, .16/3) circle [radius=.5pt]; 
\fill (.5, 1/2.5) circle [radius=.5pt];
\draw [dashed] (.4, .16/3) .. controls (.37,.2)  .. (.5, 1/2.5);
\draw (.5, 1.05/2.5) node [above] {$\varphi(\zeta', t)$}; 
\fill (0.6, .36/3) circle [radius=.5pt];
\draw [dashed] (.5, 1/2.5)--(0.6, .36/3); 
\draw (.6, .2/3) node [right] {$\Pi^{proj}(\varphi(\zeta', t))$};
\end{tikzpicture}
\caption{}
\label{Figure4}
\end{figure}

Therefore, we see that
$$
\Pi^{proj}(\varphi(\zeta', t))= \zeta'+\sum_{k=1}^n \alpha_k u_k =\zeta+\sum_{k=1}^n (\lambda_{\zeta, \varepsilon, k}+\alpha_k)u_k,
$$
where for $\lambda_{\zeta, \varepsilon, k}, \alpha_k \in \C$ and
$$
|\lambda_{\zeta, \varepsilon, k}| \le \tau(\zeta, u_k, \varepsilon) \quad \textrm{and} \quad |\alpha_k| \le 4t \le 4\varepsilon, 
$$
for $k=1, \dots, n$. By \eqref{20191226eq01} and \eqref{20191226eq02}, there exists some constant $C'>0$ depending only on $n$ and $r$, such that
$$
(1/C') \varepsilon \le  \tau(z, u_1, \varepsilon) \le C' \varepsilon
$$
and
\begin{equation} \label{20191228eq01}
(1/C') \varepsilon \le  \tau(z, u_i, \varepsilon) \le C' \varepsilon^{1/M}, \;\; i = 2, \ldots, n,
\end{equation}
where we recall that $M$ is the type of the domain $\Omega$. Therefore, for each $k=1, \dots, n$, we have
 \begin{align*}
|\lambda_{\zeta, \varepsilon,k} + \alpha_k| &\le |\lambda_{\zeta, \varepsilon,k}| + |\alpha_k| \\
& \le \tau(z, u_k, \varepsilon) + 4\varepsilon \\
& \le \tau(z, u_k, \varepsilon) + 4C'\tau(z, u_k, \varepsilon)\\
&= (1 + 4C')\tau(z, u_k, \varepsilon), 
\end{align*}
which implies 
\begin{equation} \label{20191226eq03}
\Pi^{proj}(\varphi(\zeta',t)) \in (1+ 4C') P_\varepsilon (\zeta).
\end{equation}
Here, for any $\lambda>0$, the set $\lambda P_\varepsilon(\zeta)$ is the collection of points in $\C^n$ which have the representation
$$
\xi+\sum_{k=1}^n \lambda_{\xi, \varepsilon, k} u_k  \quad \textrm{with} \quad  |\lambda_{\xi, \varepsilon, k}| \le \lambda \tau(\xi, u_k, \varepsilon), \ k=1, \dots, n. 
$$
By \cite[Proposition 3.10]{DFF99} (see, also \cite[Proposition 2.1]{J10}), there exists a constant $C_2>0$, which only depends on $C'$, such that 
$$
(1+4C')P_{\varepsilon}(\zeta) \subset P_{C_2 \varepsilon}(\zeta),
$$
which, together with \eqref{20191226eq03}, implies $T^{flow}_\varepsilon(\zeta) \subseteq T_{C_2\varepsilon}(\zeta)$, uniformly in all $\zeta \in b\Omega$ and $\varepsilon \in (0, t_0)$, such that $C_2 \varepsilon<\tau_U$, where
$$
\tau_U:=\inf_{\zeta \in b\Omega} \max_{z \in U, \Pi^{proj} (z)=\zeta} |r(z)|. 
$$

\medskip

Next, we prove the other direction, that is, there exists $C_3>1$, such that
\begin{equation} \label{20191228eq10}
T_\varepsilon(\zeta) \subseteq T_{C_3\varepsilon}^{flow}(\zeta)
\end{equation}
for $\zeta \in b\Omega$ and $\varepsilon \in \left[0, \frac{\tau_1}{10C'} \right]$. 

Let $C_3>0$ be a large number to be chosen and we denote 
$$
\{u_{1, C_3}, \dots, u_{n, C_3}\}
$$
be the $\left(C_3\varepsilon\right)$-Extremal basis associated to $\zeta$.

Let $\zeta' \in b\Omega$. Much as before, the motion of $\Pi^{proj}(\varphi(\zeta',t)), \ t \in [0, \varepsilon]$ is given by the flow of the tangential vector field $\left(-\frac{-\nabla r}{|\nabla r|^2} (\varphi(\zeta',t))\right)^{tang}$.  In particular, if $\zeta' \not \in P_{C_3\varepsilon}(\zeta) \cap b\Omega$, then $\zeta'$ can be written 
\[\zeta' = \zeta + \sum_{k=1}^n \lambda_{z,C_3\varepsilon,k}u_{k, C_3}
\]
with some $k_0 \in \{1, \dots, n\}$ such that $|\lambda_{\zeta,C_3\varepsilon,k_0}| \ge \tau(\zeta, u_{k_0, C_3}, C_3 \varepsilon)$. As before, for any $t \in [0, \epsilon]$, the displacement $|\Pi^{proj}(\varphi(\zeta',t)) - \zeta'| \le 4\varepsilon$ and
\[\Pi^{proj}(\varphi(\zeta',t)) = \zeta + \sum_{k=1}^n (\lambda_{\zeta,C_3\varepsilon,k} + \beta_k) u_{k, C_3}
\]
where $|\beta_k|< 4\varepsilon$. Note that
\begin{align*}
|\lambda_{\zeta,C_3\varepsilon,{k_0}} + \beta_{k_0}| & \ge |\lambda_{\zeta,C_3\varepsilon,{k_0}}| - |\beta_{k_0}| \\
& \ge \tau(\zeta, u_{k_0, C_3}, C_3 \varepsilon) - 4\varepsilon \\
& \ge \tau(\zeta, u_{k_0, C_3}, C_3 \varepsilon) - \frac{4C' \tau(\zeta, u_{k_0, C_3}, C_3\varepsilon)}{C_3} \\
& \quad \quad \quad (\textrm{by \eqref{20191228eq01} with scale $C_3\varepsilon$.}) \\
& > \frac{\tau(z, u_{k_0}, C_3\varepsilon)}{2},
\end{align*}
where in the last inequality, we choose $C_3>10C'$. Therefore, with such a choice of $C_3$, 
$$
\Pi^{proj}(\varphi(\zeta',t)) \not \in \frac{1}{2} P_{C_3 \varepsilon}(\zeta) \cap b\Omega,
$$
which implies for each $t \in (0, \varepsilon)$, 
$$
\frac{1}{2}P_{C_3\varepsilon}(\zeta) \cap b\Omega \subseteq \Pi^{proj}(\varphi(P_{C_3\varepsilon}(\zeta) \cap b\Omega, t)),
$$
since the nearest projection $\Pi^{proj}$ is one-to-one between $b\Omega$ and $b\Omega_t$, for $t$ sufficiently small. Applying \cite[Proposition 3.1]{DFF99} (or \cite[Proposition 2.1]{J10}) again, we can take a constant $0<C_{1/2}<\frac{1}{10C'}$, which only depends on $r$ and $n$, such that
$$
P_{C_{1/2} C_3 \varepsilon}(\zeta) \subseteq \frac{1}{2}P_{C_3\varepsilon}(\zeta).
$$
Therefore, we conclude that
$$
P_{\varepsilon}(\zeta) \cap b\Omega \subseteq \Pi^{proj}(\varphi(P_{C_3\varepsilon}(\zeta) \cap b\Omega, t))
$$
for each $t \in [0, \varepsilon]$, if we take $C_3=\frac{1}{C_{1/2}}$. In other word, this implies that the nearest point projection of each layer between $0$ and $\varepsilon$  (with respect to the defining function $r$)  of $T_{C_3 \varepsilon}^{flow}(\zeta)$ contains $P_\varepsilon \cap b\Omega$, which is equivalent to \eqref{20191228eq10}. The constant $C_3$ is uniform for any $\zeta \in b\Omega$ and $\varepsilon \in \left[0, \frac{t_0}{10C'} \right]$.

The desired result then follows if we set 
$$
C_1=\max\{C_2, C_3\} \quad \textrm{and} \quad \tau_2=\min \left\{\frac{\tau_U}{C_2}, \frac{\tau_1}{10C'}\right\}.
$$
\end{proof}

 \subsection{Dyadic structure on $\Omega$.} 
 
The goal of the second part of this section is to construct the dyadic structure on $\Omega$.

Recall that from Theorem \ref{dyadicSHT}, we have a dyadic decomposition on $b\Omega$, that is
$$
\calD=\bigcup_{k \ge 1} \calD_k
$$
with parameters $\frakC>0$ and $0<\del, \epsilon<1$ with $\del$ sufficiently small. Take some $N_0 \in \N$ sufficient large, such that
\begin{enumerate}
    \item [(1).] $\Omega \backslash \left\{z \in \C^n: r(z) \le -\del^{N_0} \right\} \subseteq V$, where we recall that $V$ is the neighborhood of $b\Omega$ defined in \eqref{norgradflow}. Thus, for each $0 \le \eta \le \del^{N_0}$, there exists a diffeomorphism between the level set $b\Omega_\eta:=\{z \in \C^n: r(z)=-\eta\}$ and $b\Omega$, via the projection $\Pi^{flow}$;
    
    \item [(2).] $\del^{N_0} \le \min\left\{\tau_1, \tau_2\right\}$, where $\tau_1$ is defined in Proposition \ref{nearbouvolume} and $\tau_2$ is defined in Theorem \ref{projflow}. 
\end{enumerate}

We modify the dyadic system $\calD$ on $b\Omega$ (we still denote it by $\calD$) according to the choice of $N_0$ as follows:
\begin{equation} \label{modifieddyadic}
\calD:=\{b\Omega\} \cup \bigcup_{k \ge N_0} \calD_k. 
\end{equation}

\begin{rem}
With the modification \eqref{modifieddyadic}, one can easily check that the parameters $\frakC$ and $\del$ for the new dyadic decomposition remain the same as the original ones, while $\epsilon$ will become smaller, which only depends on the original $\epsilon$, $\del$, the defining function $r$, any dimension constants and $N_0$.
\end{rem}

For each $k \ge N_0$, we pair $\calD_k$ with the layer
$$
\Omega_k:=\{z \in \Omega, -\del^k \le r(z)<-\del^{k+1}\}. 
$$
Note that by the choice of $N_0$, we can find a partition of $\Omega_k$ by the cubes in $\calD_k$ via $\Pi^{flow}$, namely, we have
$$
\Omega_k=\bigcup_{Q \in \calD_k} W^{\textrm{up}}_Q,  
$$
where $W^{\textrm{up}}_Q:=\left\{z \in \Omega_k, \Pi^{flow}(z) \in Q\right\}$ for some $Q \in \calD_k$. It is clear that
$$
W_{Q_1}^{\textrm{up}} \cap W_{Q_2}^{\textrm{up}}=\emptyset, 
$$
for $Q_1 \neq Q_2$ and $Q_1, Q_2 \in \calD$. Therefore, we can decompose $\Omega$ into pairwise disjoint union by as follows
$$
\Omega=W_{b\Omega}^{\textrm{up}} \cup \bigcup_{Q \in \calD_k, k \ge N_0 } W_Q^{\textrm{up}},
$$
where $W_{b\Omega}^{\textrm{up}}:=\left\{ z \in \Omega, r(z)<-\del^{N_0} \right\}$. 
\begin{defn} \label{dyadictent}

\begin{enumerate}
    \item[(1).] For any $Q \in \calD_k$, we refer the set $W_Q^{\textrm{up}}$ the \emph{upper Whitney flow tent} associated to $Q$. Moreover, we denote $c(W_Q^{\textrm{up}})$ as the \emph{center} of $W^{\textrm{up}}_Q$, where $c(W_Q^{\textrm{up}})$ is the unique preimage of $c(Q)$ on $\{z \in \Omega, r(z)=-\del^k\}$ via $\Pi^{flow}$; 
    
    \item[(2).] There exists a partial order on the set $\left\{c\left(W_Q^{\textrm{up}}\right)\right\}_{Q \in \calD}$, namely, we say $c\left(W_{Q_1}^{\textrm{up}}\right) \le c\left(W_{Q_2}^{\textrm{up}} \right)$ if $Q_1$ is a descendant of $Q_2$. Moreover, we call
    $$
    \bigcup_{c\left(W_{Q'}^{\textrm{up}} \right) \le c\left(W_Q^{\textrm{up}} \right)} \left\{ c\left(W_{Q'}^{\textrm{up}} \right) \right\}
    $$
    the \emph{Bergman-flow tree} associated to the cube $Q$.
\end{enumerate}
\end{defn}

Note that for any $Q \in \calD$, $Q \neq b\Omega$, the  dyadic flow tent $T^{flow}_d(Q)$ can be decomposed as follows
    $$
    T^{flow}_d(Q):=\bigcup_{ c\left(W_{Q'}^{\textrm{up}} \right) \le c\left(W_Q^{\textrm{up}} \right)} W_{Q'}^{\textrm{up}}. 
    $$
    
Moreover, we see that another application of the coarea formula implies that
    \begin{equation} \label{upWhitest}
    \textrm{Vol} \left(W_Q^{\textrm{up}}\right) \le \textrm{Vol}\left(T^{flow}_d(Q)\right) \le  \frac{1}{4-\del} \textrm{Vol} \left(W_Q^{\textrm{up}}\right). 
    \end{equation}

The first estimate is clear. To see the second one, we let $Q \in \calD_k$ for some $k \ge N_0$. Observe that 
\[ \textrm{Vol}\left(T^{flow}_d(Q)\right) = \int_{-\delta^k}^0 \int_{b\Omega_s} \one_{\varphi(Q,s)} |\nabla r|^{-1} dVol_{b\Omega_s} ds\]
and
\[ \textrm{Vol}\left(W^{\textrm{up}}_Q\right) = \int_{-\delta^k}^{-\delta^{k+1}} \int_{b\Omega_s} \one_{\varphi(Q,s)} |\nabla r|^{-1} dVol_{b\Omega_s} ds \]
By Proposition \ref{nearbouvolume}, we know that 
\[ \frac{\delta^k(1-\delta) \sigma(Q)}{2} \le \textrm{Vol} \left(W_Q^{\textrm{up}}\right) \le  2\delta^k(1-\delta) \sigma(Q)\]
\[ \frac{\del^k\sigma(Q)}{2} \le \textrm{Vol}\left(T^{flow}_d(Q)\right) \le  2\delta^k \sigma(Q)\]
Thus,
\[ \textrm{Vol}\left(T^{flow}_d(Q)\right) \le  2\delta^k \sigma(Q) \le \frac{4}{1-\delta}  \textrm{Vol} \left(W_Q^{\textrm{up}}\right) .\]

The following lemma is crucial in proving the weighted estimates of Bergman projection. 

\begin{lem} \label{estdyadic}
For any $z, \xi \in \Omega$, there exists a dyadic cube $Q \in \calD^t$  for some $t \in \{1, \dots, K_0\}$ (see, Proposition \ref{TLT}), such that $z, \xi \in T^{flow}_d(Q)$ and 
$$
|K_\Omega(z, \xi)| \le \frac{A}{\textrm{Vol}\left(T^{flow}_d(Q)\right)},
$$ 
for some $A>0$ which only depends on the defining function $r$ and the dimension $n$.
\end{lem}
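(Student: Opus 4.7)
The plan is to reduce the estimate to the classical pointwise bound of McNeal on the Bergman kernel of a convex domain of finite type, namely
\[
|K_\Omega(z,\xi)|\lesssim \frac{1}{\textrm{Vol}(P_{\varepsilon_0}(\zeta)\cap\Omega)},
\]
valid for $z,\xi\in\Omega$ near the boundary, where $\zeta\in b\Omega$ is a boundary point ``below'' both points and $\varepsilon_0\simeq|r(z)|+|r(\xi)|+\rho(\Pi^{flow}(z),\Pi^{flow}(\xi))$ is the natural scale. Granting this estimate, the remaining task is purely geometric: find a dyadic cube $Q\in\calD^t$ for some $t\in\{1,\dots,K_0\}$ whose associated flow tent both contains $z,\xi$ and has volume comparable to $\textrm{Vol}(P_{\varepsilon_0}(\zeta)\cap\Omega)$.

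First I would handle the interior case separately. If $|r(z)|\ge\del^{N_0}$ and $|r(\xi)|\ge\del^{N_0}$ (both points lying in $W^{\textrm{up}}_{b\Omega}$), then $|K_\Omega(z,\xi)|$ is bounded by a universal constant coming from the smoothness of $K_\Omega$ on compact subsets of $\Omega\times\Omega$; we simply take $Q=b\Omega$ so that $T_d^{flow}(Q)=\Omega$, and the desired inequality holds with $A$ proportional to $\textrm{Vol}(\Omega)$. Otherwise, after enlarging the scale $\varepsilon_0$ defined below by a fixed factor if necessary, we may assume both points lie in the neighborhood $V$ of $b\Omega$, so that $\Pi^{flow}$ is defined on both.

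Next, set $\zeta:=\Pi^{flow}(z)$, $\zeta':=\Pi^{flow}(\xi)$, and
\[
\varepsilon_0:=C_\star\max\bigl\{|r(z)|,\,|r(\xi)|,\,\rho(\zeta,\zeta')\bigr\},
\]
where $C_\star\ge 1$ is to be fixed. By the quasi-triangle inequality, $\zeta,\zeta'\in B(\zeta,2\kappa\varepsilon_0)$. Applying Proposition \ref{TLT} to this ball produces an index $t\in\{1,\dots,K_0\}$ and a cube $Q\in\calD^t$ with $B(\zeta,2\kappa\varepsilon_0)\subseteq Q$ and $\ell(Q)\le 2\kappa\widetilde{\frakC}\varepsilon_0$. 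Combining the inclusion $B(\zeta,2\kappa\varepsilon_0)\subseteq Q\subseteq B(c(Q),\frakC\ell(Q))$ with the quasi-triangle inequality forces $\ell(Q)\gtrsim\varepsilon_0$, so that $\ell(Q)\simeq\varepsilon_0$. For $C_\star$ large enough relative to $\widetilde{\frakC}$ and $\kappa$, we then obtain $|r(z)|,|r(\xi)|<\ell(Q)$, whence $z,\xi\in T_d^{flow}(Q)$.

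It remains to establish the volume equivalence $\textrm{Vol}(T_d^{flow}(Q))\simeq \textrm{Vol}(P_{\varepsilon_0}(\zeta)\cap\Omega)$. By Proposition \ref{nearbouvolume} (applicable since $\ell(Q)\le\tau_1$ by the choice of $N_0$) we have $\textrm{Vol}(T_d^{flow}(Q))\simeq\sigma(Q)\ell(Q)$, and doubling of $\sigma$ together with the sandwich property yields $\sigma(Q)\simeq\sigma(B(\zeta,\varepsilon_0))$. On the other hand, the McNeal-Stein tent $P_{\varepsilon_0}(\zeta)\cap\Omega$ is essentially the ``cylinder'' over its base $B(\zeta,\varepsilon_0)\subset b\Omega$ of height $\simeq\varepsilon_0$ in the real normal direction; a coarea argument of the same flavor as Proposition \ref{nearbouvolume}, combined with Theorem \ref{projflow} to interchange $\Pi^{proj}$ and $\Pi^{flow}$, gives $\textrm{Vol}(P_{\varepsilon_0}(\zeta)\cap\Omega)\simeq\sigma(B(\zeta,\varepsilon_0))\varepsilon_0$. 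Chaining the equivalences finishes the proof. The principal obstacle, I expect, is this final volume comparison, because one must juggle three boundary projections (the normal projection implicit in McNeal's kernel estimate, $\Pi^{proj}$, and $\Pi^{flow}$) together with the quasi-metric $\rho$ and keep all constants uniform, so that the scale $\varepsilon_0$ detected by McNeal's kernel estimate, the $\sigma$-measure of $Q$, and the Euclidean volume of $T_d^{flow}(Q)$ all fit together. Once this matching is secured, the lemma follows at once.
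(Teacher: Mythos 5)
Your argument follows essentially the same route as the paper's: both rest on the McNeal--Stein pointwise kernel bound at the natural scale $\widetilde{\rho}(z,\xi)\simeq |r(z)|+|r(\xi)|+\rho\bigl(\Pi^{proj}(z),\Pi^{proj}(\xi)\bigr)$, locate via Proposition \ref{TLT} a dyadic cube $Q$ with $\ell(Q)$ comparable to that scale (proving the lower bound on $\ell(Q)$ by the same quasi-triangle-inequality argument), arrange $|r(z)|,|r(\xi)|<\ell(Q)$ by enlarging the scale by a fixed factor, and transfer volumes among McNeal--Stein, projection and flow tents using Theorem \ref{projflow} and Proposition \ref{nearbouvolume}. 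The only difference is the order of operations (the paper passes to flow tents before invoking Proposition \ref{TLT}), so the proposal is correct.
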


\begin{proof}
Let $z, \xi \in \Omega$. Without loss of generality, we may assume 
$$
-\del^{N_0} \le r(z), r(\xi)<0
$$
and
$$
\widetilde{\rho}(z, \xi):=\inf \left\{\del>0: z, \xi \in P_{\del}\left(\Pi^{proj}(z)\right) \right\} \le \del^{N_0+1}. 
$$
 Otherwise, we simply take $Q$ to be $b\Omega$ and in this case $T^{flow}_d(Q)=\Omega$. Here, we make a remark that $\widetilde{\rho}$ is indeed symmetric, that is, $\widetilde{\rho}(z, \xi) \simeq \widetilde{\rho}(\xi, z)$, where the implicit constant only depends on $r$ and $n$ (see, e.g., \cite[Page 194]{MS94}). 

By \cite[Lemma 2.7]{J10}, there exists some constant $0<C_4<\frac{1}{\del}$ (otherwise, we can choose our $\del$ smaller), such that
$$
P_{\widetilde{\rho}(z, \xi)}\left(\Pi^{proj}(z)\right) \cap \Omega \subset T_{C_4\widetilde{\rho}(z, \xi)}\left(\Pi^{proj}(z)\right)
$$
and
$$
\textrm{Vol} \left(P_{\widetilde{\rho}(z, \xi)}(\Pi^{proj}(z))\cap \Omega \right) \simeq \textrm{Vol} \left(T_{C_4\widetilde{\rho}(z, \xi)}(\Pi^{proj}(z))\right),
$$
where the implicit constant only depends on $r$ and $n$. Note that by \cite[Page 194]{MS94}, it holds that
\begin{enumerate}
\item [(1a).] $z, \xi \in P_{\widetilde{\rho}(z, \xi)}(\Pi^{proj}(z)) \cap \Omega$; 
\item [(2a).]
$$
|K_\Omega(z, \xi)| \lesssim \frac{1}{\textrm{Vol}\left(P_{\widetilde{\rho}(z, \xi)}(\Pi^{proj}(z))\cap \Omega \right)}.
$$ 
\end{enumerate}
Therefore, we have
\begin{enumerate}
\item [(1b).] $z, \xi \in T_{C_4\widetilde{\rho}(z, \xi)}(\Pi^{proj}(z))$; 
\item [(2b).]
$$
|K_\Omega(z, \xi)| \lesssim \frac{1}{\textrm{Vol}\left(T_{C_4\widetilde{\rho}(z, \xi)}(\Pi^{proj}(z))\right)}.
$$ 
\end{enumerate}
Note that
$$
C_4 \widetilde{\rho}(z, \xi)<\frac{1}{\del} \cdot \del^{N_0+1}=\del^{N_0} \le \tau_2, 
$$
and therefore, an application of Theorem \ref{projflow} implies that we have
\begin{enumerate}
    \item [(1c).] $z, \xi \in T_{\frac{C_4}{C_1}\widetilde{\rho}(z, \xi)}^{flow}(\Pi^{proj}(z))$;
    \item [(2c).] 
    $$
    |K_{\Omega}(z, \xi)| \lesssim \frac{1}{\textrm{Vol}\left(T_{\frac{C_4}{C_1}\widetilde{\rho}(z, \xi)}^{flow}(\Pi^{proj}(z))\right)}. 
    $$
\end{enumerate}

By Proposition \ref{TLT}, there exists some $Q \in \calD^t, t \in \{1, \dots, K_0\}$, such that 
$$
B\left(\Pi^{proj}(z), \frac{C_4}{C_1}\widetilde{\rho}(z, \xi)\right) \subset Q
$$
and 
$$
\ell(Q) \lesssim \frac{C_4}{C_1}\widetilde{\rho}(z, \xi).
$$
Moreover, we claim that 
\begin{equation} \label{20191119eq01}
\ell(Q) \ge \frac{C_4\widetilde{\rho}(z, \xi)}{5C_1\kappa \frakC}. 
\end{equation}
We prove it by contradiction. Take $\zeta_1, \zeta_2 \in B\left(\Pi^{proj}(z), \frac{C_4}{C_1}\widetilde{\rho}(z, \xi)\right)$, such that $\rho(\zeta_1, \zeta_2)=\frac{C_4\widetilde{\rho}(z, \xi)}{2C_1}$. However,
\begin{eqnarray*}
\rho(\zeta_1, \zeta_2)%
&\le& \kappa \left( \rho(\zeta_1, c(Q))+\rho(\zeta_2, c(Q)) \right) \\
&\le& 2\kappa\frakC \ell(Q) \le 2\kappa \frakC \cdot \frac{C_4 \widetilde{\rho}(z, \xi)}{5C_1 \kappa \frakC}\\
&<&\frac{C_4\widetilde{\rho}(z, \xi)}{2C_1},
\end{eqnarray*}
which is a contradiction. Moreover,without the loss of generality, we may assume $\frac{C_4}{C_1}$ is sufficiently large (otherwise, we may choose $\del$ smaller so that $C_4$ can be taken large enough), such that 
$$
|r(z)|+|r(\xi)|<\ell(Q). 
$$
Indeed, this follows from \eqref{20191119eq01} and the fact that $\widetilde{\rho}(z, \xi) \gtrsim |r(z)|+|r(\xi)|$ (see, \cite[Page 194]{MS94}).

Combining all these, we conclude that
\begin{enumerate}
    \item [(1d).] $z, \xi \in T^{flow}_d(Q)$; 
    \item [(2d).] 
    $$
    |K_\Omega(z, \xi)| \lesssim \frac{1}{\textrm{Vol}\left(T^{flow}_d(Q)\right)}. 
    $$
\end{enumerate}
The lemma is proved. 
\end{proof}

\begin{rem}
The dyadic structure $\calD$ defined in \eqref{modifieddyadic} (or the Bergman-flow tree) generalizes the Bergman tree used in \cite{ARS06, RTW17} for the unit ball $\B$ in $\C^n$.
\end{rem}

An important property of the collection $\calD$ is that it forms a Muckenhoupt basis of the domain $\Omega$. We start by recalling some basic setup from  \cite{CMP11}.

Recall that by a \emph{basis} $\calB$ of $\Omega$ we mean a collection of open sets contained in $\Omega$, and the maximal operator associated to $\calB$ is defined by 
$$
M_{\calB}f(z):=\sup_{z \in B \in \calB} \langle f\rangle_B
$$
if $z \in \Omega$ and $M_{\calB}f(z)=0$ otherwise, where for any $B \in \calB$, we denote
$$
\langle f \rangle_B:=\frac{1}{\textrm{Vol}(B)} \int_B |f(z)|dV(z). 
$$
Moreover, a \emph{weight} $w$ on $\Omega$ refers to a non-negative measurable function on $\Omega$. 

\begin{defn}
Let $w$ be a weight and $\calB$ be a basis on $\Omega$.
\begin{enumerate}
    \item [(1).] We say that $w$ belongs to \emph{the Muckenhoupt class associated to $\calB$, $A_{p, \calB}$, $1<p<\infty$}, if there exists a constant $K_p$ such that for every $B \in \calB$,
    $$
    \langle w\rangle_B  \left\langle w^{1-p'}\right \rangle_B ^{p-1} \le K_p<\infty, 
    $$
    where $p'$ is the conjugate of $p$. When $p=1$, we say that $w \in A_{1, \calB}$ if $M_{\calB}w(z) \le K_1 w(z)$ for almost every $z \in \Omega$. The infimum of all such $K_p$, denoted by $[w]_{A_{p, \calB}}$ is called the $A_{p, \calB}$ constant of $w$. Finally, we let
    $$
    A_{\infty, \calB}:=\bigcup_{p \ge 1} A_{p, \calB}.
    $$
    It is immediate form the definition that $w \in A_{p, \calB}$ if and only if $w^{1-p'} \in A_{p', \calB}$. Moreover, by H\"older's inequality, if $q>p$, then $A_{p, \calB} \subset A_{q, \calB}$.
    
    \medskip
    
    \item [(2).] A basis $\calB$ is a \emph{Muckenhoupt basis} if for each $p$, $1<p<\infty$, and for every $w \in A_{p, \calB}$, the maximal operator $M_{\calB}$ is bounded on $L^p(\Omega, w)$: for every $f \in L^p(w):=L^p(\Omega, w)$, 
    \begin{equation} \label{mucbasis}
    \int_\Omega M_{\calB}f(z)^p w(z)dV(z) \lesssim \|f\|_{p, w}^p:= \int_\Omega |f(z)|^pw(z)dV(z), 
    \end{equation}
    where the implicit constant is independent of $f$ and depends only on $[w]_{A_{p, \calB}}$, the defining function $r$ and the dimension $n$. 
\end{enumerate}
\end{defn}

\begin{prop} \label{mucdyadic}
$\calD$ is a Muckenhoupt basis for the domain $\Omega$. Consequently, 
\begin{equation} \label{Muckbasis2020}
\mathcal G:=\bigcup_{k=i}^{K_0} \calD^i
\end{equation}
is also a Muckenhoupt basis for $\Omega$, where we recall that the constant $K_0$ and the dyadic systems $\calD^1, \dots \calD^{K_0}$ are defined in Theorem \ref{TLT}. 
\end{prop}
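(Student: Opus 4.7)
The plan is to show that for every $1 < p < \infty$ and every weight $w \in A_{p, \calD}$, the dyadic maximal operator $M_{\calD}$ is bounded on $L^p(\Omega, w)$; this is exactly the definition of $\calD$ being a Muckenhoupt basis. The essential structural observation is that the dyadic flow tents inherit a nested tree structure from the dyadic grid on $b\Omega$: since $\Pi^{flow}$ sends each level set $b\Omega_t$ diffeomorphically onto $b\Omega$ and respects the inclusion of cubes, if $Q_1 \subseteq Q_2$ in $\calD$ then $T_d^{flow}(Q_1) \subseteq T_d^{flow}(Q_2)$, while disjoint boundary cubes yield disjoint flow tents. Using this, for each $\lambda > 0$ the set $\{z \in \Omega : M_{\calD}f(z) > \lambda\}$ decomposes into a disjoint union of maximal dyadic flow tents $\{T_j\}$ satisfying $\langle |f| \rangle_{T_j} > \lambda$; combining H\"older's inequality (with conjugate exponents $p$ and $p'$) and the $A_{p, \calD}$ condition in the standard way yields the weak-type $(p,p)$ inequality
\begin{equation*}
\lambda^p \, w\bigl(\{M_{\calD}f > \lambda\}\bigr) \, \le \, [w]_{A_{p, \calD}} \int_\Omega |f(z)|^p w(z) \, dV(z).
\end{equation*}

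To upgrade weak- to strong-type, I would establish the openness of $A_{p, \calD}$: if $w \in A_{p, \calD}$ then $w \in A_{p-\epsilon, \calD}$ for some $\epsilon = \epsilon([w]_{A_{p,\calD}}) > 0$. This reduces to a reverse H\"older inequality for $A_{\infty, \calD}$ weights, which can be proved by the classical dyadic Calder\'on--Zygmund stopping-time argument once one knows that consecutive dyadic generations have uniformly comparable volumes. This last ingredient is supplied by Proposition \ref{nearbouvolume}, the upper-Whitney volume comparison \eqref{upWhitest}, and property (5) of Theorem \ref{dyadicSHT}, with all constants depending only on $r$, $n$, $\delta$, and $\epsilon$. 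Marcinkiewicz interpolation between the weak-type $(p-\epsilon, p-\epsilon)$ bound so obtained and the trivial $L^\infty \to L^\infty$ bound then yields $M_{\calD} : L^p(w) \to L^p(w)$.

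For the second assertion, observe that $\calD^i \subseteq \mathcal{G}$ for each $i \in \{1, \ldots, K_0\}$ implies $A_{p, \mathcal{G}} \subseteq A_{p, \calD^i}$, so by the first part each $M_{\calD^i}$ is bounded on $L^p(w)$ whenever $w \in A_{p, \mathcal{G}}$. The finite pointwise domination $M_{\mathcal{G}} f(z) \le \sum_{i=1}^{K_0} M_{\calD^i} f(z)$ then gives the boundedness of $M_{\mathcal{G}}$ on $L^p(w)$, as required. The main obstacle I expect is the reverse-H\"older / openness step: although the strategy is classical, executing it here requires carefully propagating the quantitative volume estimates from Section 3 through the Calder\'on--Zygmund stopping times, and checking that the resulting constants depend only on $r$, $n$, and the parameters of the dyadic decomposition, not on any individual cube $Q$.
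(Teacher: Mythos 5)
Your proposal is correct in substance, but it takes a genuinely different route from the paper. The paper's proof is essentially a citation: it invokes the proof of \cite[Theorem 7.1.9]{G14a} (the modern proof of Buckley's sharp bound $\|M\|_{L^p(w)\to L^p(w)}\lesssim [w]_{A_p}^{1/(p-1)}$, which runs through the universal boundedness of weighted maximal operators $M_\mu$ and entirely avoids reverse H\"older inequalities), remarking only that the Besicovitch covering lemma used there for the centered maximal function can be replaced by the disjoint upper Whitney flow tent decomposition $\Omega=\bigcup_{Q\in\calD}W_Q^{\textrm{up}}$ --- in the dyadic setting the weak $(1,1)$ bound for $M_\mu^{\calD}$ with respect to an arbitrary measure $\mu$ follows from maximal-cube selection alone. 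Your argument is instead the classical Muckenhoupt route: weak $(p,p)$ from the $A_{p,\calD}$ condition, openness of the $A_{p,\calD}$ classes via a dyadic reverse H\"older inequality, and Marcinkiewicz interpolation. This works here, and you correctly identified the two structural facts that make it go through: the tents $\{T_d^{flow}(Q)\}_{Q\in\calD}$ form a genuine dyadic lattice (nested or disjoint, each tent having finitely many ancestors terminating in $T_d^{flow}(b\Omega)=\Omega$, which guarantees the existence of maximal stopping tents since $f\in L^p(\Omega)\subset L^1(\Omega)$), and parent and child tents have uniformly comparable volumes by Proposition \ref{nearbouvolume} combined with property (5) of Theorem \ref{dyadicSHT}, which is exactly what the Calder\'on--Zygmund stopping-time proof of reverse H\"older requires. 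The trade-off: your route is self-contained but longer and yields a worse (unspecified) dependence on $[w]_{A_{p,\calD}}$, whereas the paper's route is shorter and delivers the sharp exponent $1/(p-1)$; since the proposition only asserts boundedness, either suffices. Your reduction of the second claim to the first via $M_{\calG}\le\sum_{i=1}^{K_0}M_{\calD^i}$ and $A_{p,\calG}\subseteq A_{p,\calD^i}$ matches the paper's (unstated) reasoning.
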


\begin{proof}
The second claim is clear from the first one. The proof of the first claim is standard, and, for example, follows from an easy modification of the proof of \cite[Theorem 7.1.9]{G14a}. Here, we would like to make a remark that one possible modification of the proof would be to use the upper Whitney flow tent decomposition of $\Omega$, 
$$
\Omega=\bigcup_{Q \in \calD} W_Q^{up},
$$
to replace the Besicovitch type covering lemma \cite[Lemma 7.1.10]{G14a}.
\end{proof}

%---------------------------------------------
\medskip

\section{Sparse domination and sharp weighted estimates} 

In this section, we prove a pointwise sparse bound for the Bergman projection $P$ (see, \eqref{Berproj}) on $\Omega$. As a consequence, we establish weighted norm estimates of $P$, with respect to the Muckenhoupt weight. Furthermore, this gives us several new types of estimates of $P$, which include weighted vector-valued estimates and weighted modular inequalities. 

To start with, let us fix $\calG$ (see, \eqref{Muckbasis2020}) to be the Muckenhoupt basis associated to $\Omega$. Moreover, for any weight $w$ and $B \in \calB$, we denote 
$$
\langle f \rangle_B^w:=\left(\int_B w(z)dV(z)\right)^{-1} \int_B |f(z)|w(z)dV(z).
$$

We have the following lemma.

\begin{lem} \label{sparsebound}
Let $1<p<\infty$ and $f \in L^p(\Omega)$. Then
\begin{equation} \label{sparsebound01}
|(Pf)(z)| \lesssim \sum_{i=1}^{K_0} \sum_{Q \in \calD^i}  \langle f\rangle_{T_d^{flow}(Q)}\one_{T_d^{flow}(Q)}(z) , \quad z \in \Omega. 
\end{equation}
\end{lem}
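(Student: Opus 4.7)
The plan is to turn the kernel bound of Lemma \ref{estdyadic} into a dyadic sum by partitioning the domain of integration according to the "witness" cube provided by that lemma. Concretely, I would begin with the triangle inequality applied to the defining integral \eqref{Berproj}:
\begin{equation*}
|Pf(z)| \le \int_\Omega |K(z,\xi)|\,|f(\xi)|\,dV(\xi).
\end{equation*}
Fix $z \in \Omega$. For each $\xi \in \Omega$, Lemma \ref{estdyadic} furnishes an index $i(z,\xi) \in \{1,\dots,K_0\}$ and a dyadic cube $Q(z,\xi) \in \calD^{i(z,\xi)}$ such that $z,\xi \in T_d^{flow}(Q(z,\xi))$ and
\begin{equation*}
|K_\Omega(z,\xi)| \le \frac{A}{\textrm{Vol}(T_d^{flow}(Q(z,\xi)))}.
\end{equation*}
Since the total collection $\bigcup_{i=1}^{K_0} \calD^i$ is countable, a measurable selection $\xi \mapsto (i(z,\xi), Q(z,\xi))$ exists (e.g.\ by enumerating the cubes and always picking the first one that works).

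Next, for fixed $z$, I would partition $\Omega$ into the disjoint measurable pieces
\begin{equation*}
E^{i}_{Q}(z) := \{\xi \in \Omega : i(z,\xi)=i,\ Q(z,\xi)=Q\}, \quad 1 \le i \le K_0,\ Q \in \calD^i.
\end{equation*}
By construction $E^i_Q(z) \subseteq T_d^{flow}(Q)$, so bounding $|K|$ on each piece by the uniform value $A/\textrm{Vol}(T_d^{flow}(Q))$ and enlarging the set of integration to the full tent yields
\begin{align*}
|Pf(z)| &\le A \sum_{i=1}^{K_0}\sum_{Q \in \calD^i} \frac{1}{\textrm{Vol}(T_d^{flow}(Q))}\int_{E^i_Q(z)} |f(\xi)|\,dV(\xi) \\
&\le A \sum_{i=1}^{K_0}\sum_{Q \in \calD^i} \langle f\rangle_{T_d^{flow}(Q)} \,\mathbf{1}_{\{E^i_Q(z)\neq \emptyset\}}.
\end{align*}
Finally, I would observe that $E^i_Q(z) \ne \emptyset$ forces $z \in T_d^{flow}(Q)$: any $\xi \in E^i_Q(z)$ satisfies $z \in T_d^{flow}(Q(z,\xi)) = T_d^{flow}(Q)$. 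Hence $\mathbf{1}_{\{E^i_Q(z)\neq \emptyset\}} \le \one_{T_d^{flow}(Q)}(z)$, producing exactly \eqref{sparsebound01}.

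I do not anticipate a serious technical obstacle here, since all of the geometric work has been absorbed into Lemma \ref{estdyadic} and Proposition \ref{TLT}; the argument is essentially a bookkeeping exercise turning a pointwise kernel estimate into a positive dyadic sum. The one subtle point worth double-checking is that the partition is genuinely measurable and that double-counting does not sneak in, but the selection-and-partition device above handles both concerns at once: each $\xi$ contributes to exactly one piece $E^i_Q(z)$, and the indicator at the end records only which tents are actually hit for the fixed point $z$.
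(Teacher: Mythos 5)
Your proposal is correct and follows essentially the same route as the paper: both arguments apply Lemma \ref{estdyadic} pointwise in $\xi$ for fixed $z$, bound the kernel by $A/\mathrm{Vol}(T_d^{flow}(Q))$ on the resulting tent, and reorganize the integral into a positive sum over cubes whose tents contain $z$. The only difference is bookkeeping --- you partition $\Omega$ into disjoint selection sets $E^i_Q(z)$ and then enlarge to the full tents, whereas the paper uses a finite (possibly overlapping) cover of $\Omega$ by tents containing $z$; both are harmless since every term is nonnegative.
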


\begin{proof}
Let $z \in \Omega$. By Lemma \ref{estdyadic}, we can find a finite collection of dyadic cubes $\calG_z \subset \calG$ (this is because there are only finitely many dyadic flow tents from $\calG$ containing $z$), such that 
$$
\Omega=\bigcup_{Q \in \calG_z} T_d^{flow}(Q).
$$
Moreover, for each $Q \in \calG_z$, we have $z \in T_d^{flow}(Q)$ and
$$
|K_{\Omega}(z, \xi)| \le \frac{A}{\textrm{Vol}\left(T_d^{flow}(Q) \right)}, \quad \xi \in Q, 
$$
where the constant $A>0$ is defined in Lemma \ref{estdyadic}. Note that for $Q_1, Q_2 \in \calG_z$ with $Q_1 \neq Q_2$, $T_d^{flow}(Q_1)$ may intersect with $T_d^{flow}(Q_2)$.  We have
\begin{eqnarray*}
|(Pf)(z)|%
&=& \left| \int_\Omega K_{\Omega}(z, \xi)f(\xi)dV(\xi) \right|\le \sum_{Q \in \calG_z} \int_{T_d^{flow}(Q)} \left|K_\Omega(z, \xi) f(\xi)\right|dV(\xi) \\
&\lesssim& \sum_{Q \in \calG_z} \frac{\one_{T_d^{flow}(Q)}(z)}{\textrm{Vol} \left(T_d^{flow}(Q) \right)} \int_{T_d^{flow}(Q)}|f(\xi)|dV(\xi),
\end{eqnarray*}
which is clearly bounded by the right hand side of \eqref{sparsebound01}.
\end{proof}

\begin{rem}
The expression on the right hand side of the estimate \eqref{sparsebound01} can be written as
$$
\sum_{i=1}^{K_0} \calA^{\calD^i}f (z), 
$$
where for each $i \in \{1, \dots, K_0\}$,
$$
\calA^{\calD^i}f (z):=\sum_{Q \in \calD^i}  \langle f\rangle_{T_d^{flow}(Q)}\one_{T_d^{flow}(Q)}(z), \quad z \in \Omega.
$$
The operators $\calA^{\calD^i}$ are known as the \emph{positive sparse operators} with respect to the collection of dyadic flow tents $\left\{T_d^{flow}(Q)\right\}_{Q \in \calD^i}$. Here, the word ``sparse" refers to the estimate \eqref{upWhitest}, more precisely, the estimate \eqref{upWhitest} guarantees the collection $\left\{T_d^{flow}(Q)\right\}_{Q \in \calD^i}$ to be a \emph{sparse family}.  We refer the reader \cite{L13a, L13b, CR16, LN19, L17} and the references there in for more information about this concept. 
\end{rem}

Here is the main result. 

\begin{thm} \label{Mainthm}
Let $1<p<\infty$ and $w \in A_{p, \calG}$. There holds
\begin{equation} \label{weightedestimate}
\|P: L^p(w) \to L^p(w)\| \lesssim [w]_{A_{p, \calG}}^{\max \left\{1, \frac{1}{p-1}\right\}}.
\end{equation}
Moreover, the estimate \eqref{weightedestimate} is sharp, in the sense that there exists a domain $\Omega$ of finite type, a $p \in (1, \infty)$ and a weight $w \in A_{p, \calG}$ such that
\begin{equation} \label{sharpexample}
\|P: L^p(w) \to L^p(w)\| \gtrsim [w]_{A_{p, \calG}}^{\max \left\{1, \frac{1}{p-1}\right\}}.
\end{equation}
Here, in both estimates above, the implicit constant only depends on $r$ and the dimension $n$.
\end{thm}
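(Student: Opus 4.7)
The plan is to treat the two directions separately. For the upper bound \eqref{weightedestimate}, I would combine the pointwise sparse domination of Lemma \ref{sparsebound} with the known sharp weighted bound for positive sparse operators in a Muckenhoupt-basis setting. For the lower bound \eqref{sharpexample}, I would exhibit a concrete example on the unit ball $\B \subset \C^n$, which is a smoothly bounded convex domain of finite type (of type $2$), and invoke the sharpness construction of \cite{RTW17}.

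For the upper bound, Lemma \ref{sparsebound} gives $|Pf(z)| \lesssim \sum_{i=1}^{K_0} \calA^{\calD^i} f(z)$, so it suffices to prove, for each fixed $i$, that
$$
\bigl\| \calA^{\calD^i} f \bigr\|_{L^p(w)} \lesssim [w]_{A_{p,\calG}}^{\max\{1,\, 1/(p-1)\}} \|f\|_{L^p(w)}.
$$
This is the standard sharp $A_p$ bound for positive sparse operators, which I would establish by adapting the argument of Cruz-Uribe--Martell--P\'erez (or Lerner's median decomposition) to our abstract basis. The key inputs are: (i) the sparseness of $\{T_d^{flow}(Q)\}_{Q \in \calD^i}$, which follows from the volume comparison \eqref{upWhitest} together with the pairwise disjointness of the upper Whitney flow tents $\{W_Q^{\textrm{up}}\}$; (ii) the $A_{p,\calG}$ assumption on $w$; and (iii) Proposition \ref{mucdyadic}, which guarantees $L^p(w)$-boundedness of $M_\calG$. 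Concretely, testing against $g \in L^{p'}(w^{1-p'})$ and writing $T = T_d^{flow}(Q)$, one estimates $\sum_Q \langle f \rangle_T \langle g \rangle_T \textrm{Vol}(T)$ by splitting into $A_p$ and $A_\infty$ contributions and applying the Hyt\"onen--P\'erez type refinement to recover the exponent $\max\{1, 1/(p-1)\}$, exactly as in the classical $A_2$-theorem scaling.

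For the sharpness claim \eqref{sharpexample}, the unit ball $\B$ is a smoothly bounded convex domain of type $2$, so the theory of this paper applies to it. The dyadic flow-tent structure $\calG$ constructed here is, on $\B$, equivalent (with constants depending only on $r$ and $n$) to the standard dyadic Bergman tree used in \cite{RTW17}; in particular, the $A_{p,\calG}$ constant is comparable to the Bekolle--Bonami constant of the associated weight class. Then the explicit radial weight $w_s(z) = (1-|z|^2)^s$ (or its dyadic analogue) constructed in \cite{RTW17} saturates the bound, and the lower bound on the operator norm transfers directly to give \eqref{sharpexample}.

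The main obstacle is verifying the sharp sparse operator bound in this general Muckenhoupt-basis setting on $\Omega$. Unlike on Euclidean space, $\calG$ is not a classical dyadic system, so one must check carefully that the standard proofs of the sharp $A_p$-theorem for sparse forms (via median decomposition, or via parallel stopping trees adapted to $M_\calG$) go through with constants depending only on $r$ and $n$. A secondary, more routine issue is the sharpness transfer on $\B$: one must check that the weights from \cite{RTW17} satisfy $A_{p,\calG}$ with the correct comparability, which reduces to showing that on $\B$ the dyadic flow tents are uniformly comparable to the standard Carleson tents over dyadic pseudo-balls on the sphere.
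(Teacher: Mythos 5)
Your proposal follows essentially the same route as the paper: pointwise sparse domination (Lemma \ref{sparsebound}) plus the standard sharp weighted bound for positive sparse operators, with sharpness delegated to the unit-ball example of \cite{RTW17}. The paper's execution of the sparse-operator bound is the Cruz-Uribe--Martell--P\'erez scheme you name: reduce to $1<p\le 2$ by duality using $[\sigma]_{A_{p',\calG}}^{1/(p'-1)}=[w]_{A_{p,\calG}}$, take the $(p-1)$-th power inside the sparse sum (using $p-1\le 1$), dualize against $g$ with $\|g\|_{p,w}\le 1$, pull out $[w]_{A_{p,\calG}}$ from the factor $\langle\sigma\rangle_T^{p-1}\langle w\rangle_T$, and control the remaining sum by the sparseness estimate \eqref{upWhitest} and the disjointness of the $W_Q^{\textrm{up}}$. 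No median decomposition or mixed $A_p$--$A_\infty$ refinement is needed; the pure $A_p$ exponent comes out directly.

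One ingredient in your list deserves correction: the maximal-function input that closes the argument is \emph{not} Proposition \ref{mucdyadic}. That proposition gives $L^p(w)$-boundedness of the unweighted $M_\calG$ with an unquantified dependence on $[w]_{A_{p,\calG}}$, so using it would destroy the sharp exponent. What the paper actually uses is the universal bound $\|M_\calB^\sigma\|_{L^p(\sigma)\to L^p(\sigma)}\lesssim 1$ for the \emph{weighted} dyadic maximal operators $M_\calB^\sigma$, $M_\calB^w$ (a martingale fact valid for any nested family, \cite[(7.1.28)]{G14a}), applied with the weights $\sigma$ and $w$ themselves as the underlying measures. Since each $\{T_d^{flow}(Q)\}_{Q\in\calD^i}$ inherits the nesting of $\calD^i$, this goes through with constants independent of $w$. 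With that substitution your argument is the paper's. For sharpness, the paper simply cites \cite[Section 5]{RTW17} for the ball with $p=2$; your extra step of checking that the flow tents on $\B$ are comparable to the standard Bergman-tree tents is the right thing to verify but is left implicit there.
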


\begin{proof}
The sharpness \eqref{sharpexample} follows from the classical case when $\Omega$ is the unit ball in $\C^n$ and $p=2$, which was contained in \cite[Section 5]{RTW17}.

It suffices for us to prove \eqref{weightedestimate}. Let $\sigma=w^{-p'/p}$ be the dual weight of $w$. Observe that it suffices to consider the case when $1<p \le 2$, while for the case $p>2$ will follow from a duality argument (more precisely, the duality between $L^p(w)$ and $L^{p'}(\sigma)$) and the fact that $[\sigma]_{A_{p', \calG}}^{\frac{1}{p'-1}}=[w]_{A_{p, \calG}}$. 

Note that it suffices to prove
$$
\|P(\sigma \cdot): L^p(\sigma) \to L^p(w)\| \lesssim [w]_{A_{p,\calG}}^{\frac{1}{p-1}}, 
$$
that is
$$
\|P(\sigma f)\|_{p, w} \lesssim [w]_{A_{p, \calG}}^{\frac{1}{p-1}} \|f\|_{p, \sigma}
$$
for any $f \in L^p(\sigma)$, which is equivalent to proving
\begin{equation} \label{20200103eq01}
\|P(\sigma f)^{p-1}\|_{p', w} \lesssim [w]_{A_{p, \calG}} \|f\|_{p, \sigma}^{p-1}.
\end{equation}
Now let $g \in L^p(w)$ with $\|g\|_{p, w} \le 1$. Using Lemma \ref{sparsebound} together with the fact that $0<p-1 \le 1$, we find that
\begin{eqnarray*}
\left(P(\sigma f)(z)\right)^{p-1}%
&\le& \left(\sum_{i=1}^{K_0} \sum_{Q \in \calD^i}  \langle \sigma f\rangle_{T_d^{flow}(Q)}\one_{T_d^{flow}(Q)}(z)
\right)^{p-1} \\
&\le& \sum_{i=1}^{K_0} \sum_{Q \in \calD^i} \one_{T_d^{flow}(Q)}(z) \left( \langle \sigma f\rangle_{T_d^{flow}(Q)}\right)^{p-1}.
\end{eqnarray*}
Therefore, we have
\begin{eqnarray*}
&&\left|\int_\Omega  \left(P(\sigma f)(z)\right)^{p-1}g(z)w(z)dV(z) \right| \\
&& \le \sum_{i=1}^{K_0} \sum_{Q \in \calD^i} \left(\langle \sigma f\rangle_{T_d^{flow}(Q)}\right)^{p-1} \cdot \int_{T_d^{flow}(Q)} |g(z)w(z)|dV(z) \\
&&= \sum_{i=1}^{K_0} \sum_{Q \in \calD^i} \langle \sigma \rangle_{T_d^{flow}}^{p-1} \langle w \rangle_{T_d^{flow}} \textrm{Vol} \left( T_d^{flow}(Q) \right) \left( \langle f \rangle_{T_d^{flow}(Q)}^\sigma \right)^{p-1} \langle g \rangle_{T_d^{flow}(Q)}^w \\
&& \le [w]_{A_{p, \calG}} \sum_{i=1}^{K_0} \sum_{Q \in \calD^i} \textrm{Vol} \left( T_d^{flow}(Q) \right) \left( \langle f \rangle_{T_d^{flow}(Q)}^\sigma \right)^{p-1} \langle g \rangle_{T_d^{flow}(Q)}^w.
\end{eqnarray*}
We have the following claim: for each $i \in \{1, \dots, K_0\}$,
\begin{equation} \label{20200103eq02}
\sum_{Q \in \calD^i} \textrm{Vol} \left( T_d^{flow}(Q) \right) \left( \langle f \rangle_{T_d^{flow}(Q)}^\sigma \right)^{p-1} \langle g \rangle_{T_d^{flow}(Q)}^w \lesssim \|f\|_{p, \sigma}^{p-1}.
\end{equation} 
Indeed, by \eqref{upWhitest} and the disjointness of the upper Whitney flow tents with respect to each $\calD^i$, we have
\begin{eqnarray*}
\textrm{LHS of \eqref{20200103eq02}}%
&\lesssim& \sum_{Q \in \calD^i} \textrm{Vol} \left(W_Q^{up} \right) \left( \langle f \rangle_{T_d^{flow}(Q)}^\sigma \right)^{p-1} \langle g \rangle_{T_d^{flow}(Q)}^w \\
& \lesssim& \sum_{Q \in \calD^i} \int_{W_Q^{up}} \left(M_\calB^\sigma f(z) \right)^{p-1} M_B^w g(z) dV(z) \\
&\le& \int_\Omega \left(M_\calB^\sigma f(z) \right)^{p-1} M_{\calB}^w g(z) dV(z) \\
&\le& \|M_{\calB}^\sigma f\|_{p, \sigma}^{p-1} \|M_{\calB}^w g\|_{p, w} \\
&\lesssim & \|f\|_{p, \sigma}^{p-1},
\end{eqnarray*}
where in the above estimates, $M_{\calB}^w$ refers to the \emph{weighted dyadic maximal operator} with respect to the weight $w$, that is, 
$$
M_\calB^w g(z):=\sup_{B \in \calB} \left( \int_B w(z)dV(z) \right)^{-1} \int_B |g(z)|w(z)dV(z)
$$
and in the last estimate, we use the well-known fact that $M_\calB^w$ maps $L^p(w)$ strongly to $L^p(w)$ for $1<p<\infty$ (see, e.g., \cite[(7.1.28)]{G14a}). 

Therefore, by \eqref{20200103eq02}, we have
$$
\left|\int_\Omega  \left(P(\sigma f)(z)\right)^{p-1}g(z)w(z)dV(z) \right| \lesssim \|f\|_{p, \sigma}^{p-1}.
$$
The desired result will follow by taking the supremum of all $g \in L^p(w)$ with $\|g\|_{p, w} \le 1$.
\end{proof}

\begin{rem}
We note that the constant $[w]_{A_{p, \calG}}^{\max\left\{1, \frac{1}{p-1} \right\}}$ is exactly the constant appearing in the $A_2$ theorem (see, e.g., \cite{H12, L13a, LN19, L17}).
\end{rem}

Here are some consequences of Theorem \ref{Mainthm}. The following corollary was previously proved by McNeal in \cite{M94b} by showing that $P$ is a generalized Calder\'on-Zygmund operator.  

\begin{cor} \label{cor001}
For any $1<p<\infty$, $P$ is bounded on $L^p(\Omega)$. 
\end{cor}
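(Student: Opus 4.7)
The plan is essentially to specialize Theorem \ref{Mainthm} to the constant weight $w \equiv 1$, since the hard work has already been carried out in establishing the weighted estimate. First I would verify that the constant weight $w \equiv 1$ lies in $A_{p, \calG}$ for every $1 < p < \infty$. Indeed, for any $B \in \calG$ we have $\langle w \rangle_B = 1$ and $\langle w^{1-p'} \rangle_B^{p-1} = 1$, so $[w]_{A_{p, \calG}} = 1$ and the hypothesis of Theorem \ref{Mainthm} is satisfied trivially.

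Next, I would simply invoke Theorem \ref{Mainthm} with this choice of weight. Since $L^p(\Omega, w\,dV) = L^p(\Omega, dV)$ when $w \equiv 1$, the weighted inequality
$$
\|P f\|_{L^p(w)} \lesssim [w]_{A_{p, \calG}}^{\max\{1, \frac{1}{p-1}\}} \|f\|_{L^p(w)}
$$
reduces to $\|Pf\|_{L^p(\Omega)} \lesssim \|f\|_{L^p(\Omega)}$, with implicit constant depending only on the defining function $r$ and the dimension $n$. This is exactly the desired $L^p$-boundedness of $P$.

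Since this deduction is essentially automatic, there is no real obstacle — all the analysis (the sparse domination in Lemma \ref{sparsebound}, the Muckenhoupt basis property in Proposition \ref{mucdyadic}, and the resulting weighted bound in Theorem \ref{Mainthm}) has been done in the preceding material. The only point worth a brief remark in a writeup is that the implicit constant in Theorem \ref{Mainthm} is uniform in $w$ for bounded $[w]_{A_{p, \calG}}$, so plugging in the trivial weight truly gives an unweighted inequality with a dimension- and $r$-dependent constant, matching the statement originally proved by McNeal in \cite{M94b}.
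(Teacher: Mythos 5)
Your proposal is correct and is exactly the paper's argument: the paper also deduces the corollary by specializing Theorem \ref{Mainthm} to $w \equiv 1$. Your explicit check that $[w]_{A_{p,\calG}} = 1$ for the constant weight is a fine (if routine) addition.
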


\begin{proof}
The desired result is a special case of Theorem \ref{Mainthm} with $w \equiv 1$.
\end{proof}

Another application of Theorem \ref{Mainthm}, together with the fact that $\calG$ is a Muckenhoupt basis, is to establish some new estimates for the Bergman projection $P$.

\begin{cor} \label{cor002}
\begin{enumerate}
    \item [1.] (Weighted Vector-valued estimate). For $1<p, q<\infty$ and $w \in A_{p, \calG}$, we have
    $$
    \left\| \left( \sum_i |Pf_i|^q \right)^{1/q} \right\|_{p, w} \lesssim \left\| \left( \sum_i |f_i|^q\right)^{1/q} \right\|_{p, w}.
    $$
    In particular, it holds that
    $$
   \left\| \left( \sum_i |Pf_i|^q \right)^{1/q} \right\|_p \lesssim \left\| \left( \sum_i |f_i|^q\right)^{1/q} \right\|_p.
   $$
    \item [2.] (Weighted modular inequality). For $1<p<\infty$, $\alpha \in \R$ and $w \in A_{p, \calG}$, we have
    \begin{eqnarray*}
    &&\int_\Omega |Pf(z)|^p \log(e+|Pf(z)|)^\alpha w(z)dV(z) \\
    && \quad \quad \quad \quad  \quad \quad\lesssim \int_\Omega |f(z)|^p \log(e+|f(z)|)^\alpha w(z)dV(z).
    \end{eqnarray*}
\end{enumerate}
\end{cor}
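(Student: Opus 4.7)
The plan is to derive both parts of Corollary \ref{cor002} from Theorem \ref{Mainthm} via the Rubio de Francia extrapolation machinery for Muckenhoupt bases, developed systematically in \cite{CMP11}. The essential inputs are already in hand: Theorem \ref{Mainthm} gives $P : L^p(w) \to L^p(w)$ quantitatively for every $1<p<\infty$ and every $w \in A_{p, \calG}$, while Proposition \ref{mucdyadic} confirms that $\calG$ is a Muckenhoupt basis for $\Omega$, so the associated maximal operator $M_\calG$ is bounded on $L^p(w)$ for every $w \in A_{p, \calG}$. These two facts together supply exactly the hypotheses needed to run the standard extrapolation arguments on the measure space $(\Omega, dV)$.

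For part (1), I would fix some convenient exponent $p_0 \in (1, \infty)$ (e.g.\ $p_0 = 2$) and invoke the vector-valued Rubio de Francia extrapolation theorem (cf.\ \cite[Corollary 3.12]{CMP11}): whenever $T$ is bounded on $L^{p_0}(w)$ for every $w \in A_{p_0, \calG}$, its $\ell^q$-valued extension maps $L^p(w;\ell^q) \to L^p(w;\ell^q)$ for all $1<p,q<\infty$ and all $w \in A_{p, \calG}$. Applied to $T = P$, this yields the desired weighted estimate, and the unweighted version follows by taking $w\equiv 1 \in A_{p,\calG}$. For part (2), I would apply the modular extrapolation theorem for Muckenhoupt bases (cf.\ \cite[Chapter 5]{CMP11}) with the Young function $\Phi_\alpha(t) = t^p \log(e+t)^\alpha$; this family satisfies the appropriate $B_p$-type growth conditions (with a routine duality argument handling the case $\alpha < 0$), so the single-weight bound of Theorem \ref{Mainthm} extrapolates to the claimed weighted modular inequality.

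The main obstacle is verifying that the extrapolation theorems in \cite{CMP11}, typically stated in the Euclidean setting with balls or cubes, transfer to the present setting $(\Omega, dV, \calG)$. Inspection of the arguments shows that they rely only on $\calG$ being a Muckenhoupt basis, the strong $L^p(w)$-boundedness of $M_\calG$, and standard structural properties of the $A_{p, \calG}$-classes, all of which are already established. The transfer is therefore essentially bookkeeping rather than a new idea, and both inequalities follow from a direct translation of the proofs in \cite{CMP11}.
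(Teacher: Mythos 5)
Your proposal is correct and follows exactly the route the paper takes: Theorem \ref{Mainthm} plus Proposition \ref{mucdyadic} feed the Rubio de Francia extrapolation machinery for Muckenhoupt bases in \cite{CMP11}, with vector-valued extrapolation giving part (1) and modular extrapolation giving part (2). You have in fact supplied more detail than the paper, which leaves the verification to the reader.
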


\begin{proof}
These results follow from Proposition \ref{mucdyadic}, Theorem \ref{Mainthm} and the extrapolation theorem of Rubio de Francia (see, e.g., \cite{CMP11}). We leave the detail to the interested reader.
\end{proof}

\end{document}